\newtheorem{lemma}{Lemma}[section]
\newtheorem{theorem}{Theorem}[section]
\newtheorem{remark}{Remark}[section]
\newtheorem{prop}{Proposition}[section]
\numberwithin{equation}{section}
\begin{document}

\title{ON THE EXACT INVERSE PROBLEM OF THE CALCULUS OF VARIATIONS}

\author{\footnotesize Veronika Chrastinov\'a*, V\'aclav Tryhuk**}

\maketitle

\begin{abstract}
The article concerns the problem if a~given system of differential equations is identical with the Euler--Lagrange system of an~appropriate variational integral. Elementary approach is applied. The main results involve the determination of the first--order variational integrals related to the second--order Euler--Lagrange systems.
\end{abstract}

{\sl Keywords: }Euler--Lagrange expression; inverse problem
\medskip

2010 Mathematics Subject Classification: 49N45

\bigskip

In the broadest sense, the inverse problem of the calculus of variations concerns the indication of latent extremality principles. The \emph{exact inverse problem} appears as a~very strict subcase: to determine if a~given system of differential equations is \emph{identical} with the Euler--Lagrange system of appropriate variational integral. This problem was investigated for a~long time, however, the results still cannot be regarded as satisfactory.

Our aim is twofold. First, to demonstrate the simplicity of the well--known achievements on the exact inverse problem. If the classical methods of mathematical analysis are applied, the resulting Propositions \ref{prop1.1}--\ref{prop1.4} can be included into elementary textbooks. Second, to propose a reasonable method of determination of the ``most economical'' solutions of the exact inverse problem for the most important case of the second--order Euler--Lagrange systems of partial differential equations.

All literature is actually subject to the strong mechanisms of the jet theory, we may refer to \cite{T1a}--\cite{T9a} for typical examples. The variational integrals are defined on total spaces of jet bundles $J^r(\pi)$ of a~fibered manifolds $\pi: E\rightarrow M$ where $M$ represents the independent variables which are strongly distinguished from the dependent variables on the fibers of~$\pi.$
This approach ensures the rich structure (with Euler--, Dedecker--, Helmholz--, Sonin-- mappings, a~hierarchy of projections, horizontal and vertical forms, connections, the variational sequence, and so like) but also a~certain rigidity of the powerful tools: admissible mappings should preserve the fibration $\pi$ and the order of jets, the singularities in fibers are rejected, multivalued solutions are omitted. As a~result, the true nature of problems to be resolved may be obscured, see Appendix for a~curious example of this unpleasant reality.

We return to the the idea of article \cite{T10a} which is adapted for partial differential equations. Alas, this approach cannot be expressed in terms of the jet theory.

\section{Preface}\label{sec1}
In order to outline the core of actual achievements, let us introduce the \emph{jet coordinates}
\begin{equation}\label{eqn1.1}x_i,\ w^j_I\quad (i=1,\ldots,n;\, j=1,\ldots,m;\, I=i_1\cdots i_r;\, r=0,1,\ldots\,).\end{equation}
They are called \emph{independent variables} $x_1,\ldots,x_n,$ \emph{dependent variables} $w^1,\ldots,w^m$ (empty $I=\emptyset$ with $r=0)$ and \emph{higher--order variables} $w^j_I$ (nonempty $I$ with $r\geq 1)$ which correspond to derivatives
\[\frac{\partial w^j}{\partial x_I}=\frac{\partial^rw^j}{\partial x_{i_1}\cdots\partial x_{i_r}}\quad (I=i_1\cdots i_r;\, i_1,\ldots,i_r=1,\ldots,n)\] in the familiar sense. For a~good reason, we always suppose $i_1\leq\ldots\leq i_r$ from now on.

We shall deal with $C^\infty$--smooth functions, each depending on a~finite number of coordinates (\ref{eqn1.1}). However, in this Preface, the functions may depend on a~parameter $t.$ So the primary independent variables are completed with the additional term $t\, (=x_{n+1})$ and the higher--order variables $w^j_I$ with the additional \emph{variations} $w^j_{It},w^j_{Itt},\ldots\,$ which correspond to derivatives
\[\frac{\partial}{\partial t}\frac{\partial w^j}{\partial x_I}\,(=\frac{\partial}{\partial x_{n+1}}\frac{\partial w^j}{\partial x_I}), \frac{\partial^2}{\partial t^2}\frac{\partial w^j}{\partial x_I}\,(=\frac{\partial^2}{\partial x^2_{n+1}}\frac{\partial w^j}{\partial x_I}), \ldots\,.\]
In fact only the \emph{first--order variations} $w^j_{It}$ are important. Altogether we speak of the \emph{extendend jet coordinates}.

Some functions $w^j=w^j(x_1,\ldots,x_n,t)$ will be substituted into the functions $F=F(x_1,\ldots,x_n,t,\cdot\cdot,w^j_I,w^j_{It},\cdot\cdot)$ under consideration. Let 
\[\mathcal F=\mathcal F(x_1,\ldots,x_n,t)\] \[=F\left(x_1,\ldots,x_n,t,\cdot\cdot,\frac{\partial w^j}{\partial x_I}(x_1,\ldots,x_n,t),\frac{\partial}{\partial t}\frac{\partial w^j}{\partial x_I}(x_1,\ldots,x_n,t),\cdot\cdot\right)\]
be the result of substitution. Then
\[\frac{\partial\mathcal F}{\partial x_i}=\frac{d}{dx_i}F=F_{x_i}+\sum w^j_{Ii}F_{w^j_I}+\sum w^j_{Iit}F_{w^j_{It}}+\cdots\qquad (i=1,\ldots,n),\]
\[\frac{\partial\mathcal F}{\partial t}=\frac{d}{dt}F=F_t+\sum w^j_{It}F_{w^j_I}+\sum w^j_{Itt}F_{w^j_{It}}+\cdots\]
in terms of \emph{total derivatives} $d/dx_i, d/dt.$
The iterations
\[\frac{d}{dx_I}=\frac{d}{dx_{i_1}}\cdots\frac{d}{dx_{i_r}},\ \frac{d}{dt}\frac{d}{dx_I}=\frac{d}{dt}\frac{d}{dx_{i_1}}\cdots\frac{d}{dx_{i_r}}\quad (I=i_1\cdots i_r)\]
make good sense, too. In accordance with the common practice, the presence of various substitutions need not be always explicitly declared since it will be clear from the context.

With this preparation, let $f=f(\cdot\cdot,x_i,w^j_I,\cdot\cdot)$ be a~fixed function of variables (\ref{eqn1.1}), the \emph{Lagrange function}. Repeated use of the rule
\[g\,w^j_{Iit}=-\frac{d}{dx_i}g\cdot w^j_{It}+\frac{d}{dx_i}(g\,w^j_{It})\]
yields the classical identity
\begin{equation}\label{eqn1.2}
\frac{d}{dt}f=\sum f_{w^j_I}w^j_{It}=e[f]+d[f]\quad (e[f]=\sum e^j[f]w^j_t, d[f]=\sum\frac{d}{dx_i}F_i)\end{equation}
where
\begin{equation}\label{eqn1.3} e^j[f]=\sum (-1)^r\frac{d}{dx_I}f_{w^j_I}\quad (j=1,\ldots,m;\, I=i_1\cdots i_r;\, i_1\leq\ldots\leq i_r)\end{equation}
are the \emph{Euler--Lagrange expressions} and $d[f]$ the \emph{divergence component} with (not uniquely determined) coefficients $F_i$ linearly depending on variations $w^j_{It}.$
\begin{prop}[uniqueness]\label{prop1.1} Let
\begin{equation}\label{eqn1.4}\frac{d}{dt}f=\sum e^jw^j_t+\sum\frac{d}{dx_i}G_i\end{equation}
where $e^j$ are functions of variables $(\ref{eqn1.1})$ while $G_i$ may also depend on variations. Then $e^j[f]=e^j;$ $j=1,\ldots,m;$ and $d[f]=\sum\frac{d}{dx_i}G_i.$\end{prop}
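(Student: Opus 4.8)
The plan is to compare the two representations of $\frac{d}{dt}f$ and to recover the Euler--Lagrange coefficients by a purely formal operator. Subtracting $(\ref{eqn1.4})$ from the identity $(\ref{eqn1.2})$--$(\ref{eqn1.3})$, both of which equal $\frac{d}{dt}f,$ I obtain
\begin{equation*}\sum_j\bigl(e^j[f]-e^j\bigr)w^j_t+\sum_i\frac{d}{dx_i}\bigl(F_i-G_i\bigr)=0,\end{equation*}
an identity in the extended jet coordinates. Put $\varepsilon^j=e^j[f]-e^j$ and $H_i=F_i-G_i.$ By $(\ref{eqn1.3})$ each $e^j[f]$ is assembled from the variation--free functions $f_{w^j_I}$ by total $x$--derivatives, hence is itself free of variations; since the $e^j$ are free of variations by hypothesis, every $\varepsilon^j$ is a function of the coordinates $(\ref{eqn1.1})$ alone. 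The $F_i,G_i,$ and therefore the $H_i,$ are by construction linear in the variations $w^j_{It}.$ Thus the whole identity is linear in the variations with variation--free coefficients, and the goal reduces to proving $\varepsilon^j=0$: the remaining assertion $d[f]=\sum\frac{d}{dx_i}G_i$ is then immediate, since $\sum\frac{d}{dx_i}H_i=0$ while $d[f]=\sum\frac{d}{dx_i}F_i$ by definition.

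To isolate $\varepsilon^j$ I would introduce, on functions $L$ linear in the variations, the operator
\begin{equation*}\mathcal E_j[L]=\sum_I(-1)^r\frac{d}{dx_I}\frac{\partial L}{\partial w^j_{It}}\qquad(I=i_1\cdots i_r),\end{equation*}
which is exactly the prescription $(\ref{eqn1.3})$ applied in the variation slot $w^j_{It}$ in place of $w^j_I.$ Applying $\mathcal E_j$ to the identity just obtained rests on two facts. First, $\mathcal E_j\bigl[\sum_k\varepsilon^k w^k_t\bigr]=\varepsilon^j$: since the $\varepsilon^k$ carry no variations, $\partial/\partial w^j_{It}$ of this sum equals $\varepsilon^j$ for $I=\emptyset$ and vanishes otherwise, so only the term $r=0$ survives. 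Second, $\mathcal E_j\bigl[\sum_i\frac{d}{dx_i}H_i\bigr]=0,$ that is, the operator annihilates total $x$--divergences. Granting these, $\mathcal E_j$ applied to the identity yields $\varepsilon^j=0,$ as required.

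The crux, and the only real obstacle, is the second fact: that $\mathcal E_j$ kills a divergence $\frac{d}{dx_k}H.$ I would establish it from the commutation rule $\frac{\partial}{\partial w^j_{It}}\frac{d}{dx_k}=\frac{d}{dx_k}\frac{\partial}{\partial w^j_{It}}+\frac{\partial}{\partial w^j_{I't}},$ where $I'$ is $I$ with one index $k$ deleted and the last term is absent when $k$ does not occur in $I$; this follows at once from the expansion of the total derivative in the Preface together with the symmetry of second partials. Feeding it into $\mathcal E_j\bigl[\frac{d}{dx_k}H\bigr]$ produces two alternating sums over $I$ which, after relabeling the summation index, cancel term by term --- the same telescoping, rooted in the rule $g\,w^j_{Iit}=-\frac{d}{dx_i}g\cdot w^j_{It}+\frac{d}{dx_i}(g\,w^j_{It})$ that produced $(\ref{eqn1.2})$ in the first place. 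I expect the ordered--multi--index bookkeeping to be the only delicate point; no conceptual difficulty arises. A fully classical variant avoids the operator altogether: substitute arbitrary functions $w^j(x_1,\ldots,x_n,t),$ fix $t,$ and integrate the identity in $x$ over a domain where the variations have compact support; the divergence integrates away and the fundamental lemma of the calculus of variations forces $\varepsilon^j=0.$
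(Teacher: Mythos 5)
Your argument is correct, but your main route is genuinely different from the paper's. The paper substitutes concrete functions $w^j(x_1,\ldots,x_n,t)$, integrates the difference of (\ref{eqn1.2}) and (\ref{eqn1.4}) over a domain $\Omega$, converts the divergence terms into boundary integrals, and applies the fundamental lemma of the calculus of variations to variations vanishing near $\partial\Omega$ --- which is exactly the ``fully classical variant'' you mention in your last sentence. Your primary proof instead stays inside the formal jet calculus: you apply the Euler operator $\mathcal E_j$ in the variation slot and reduce everything to the algebraic fact that this operator annihilates total $x$--divergences. That fact does hold as you set it up: with the ordered convention $i_1\leq\cdots\leq i_r$ the commutation rule $\frac{\partial}{\partial w^j_{It}}\circ\frac{d}{dx_k}=\frac{d}{dx_k}\circ\frac{\partial}{\partial w^j_{It}}+\frac{\partial}{\partial w^j_{I't}}$ carries no multiplicity factors, and after relabelling the deleted index the two alternating sums cancel, so the telescoping goes through. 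Two small remarks: reading (\ref{eqn1.4}) as a formal identity in the extended jet coordinates is legitimate (an identity valid for all substituted functions is an identity in the jet variables), and your assumption that $G_i$ is \emph{linear} in the variations is stronger than the stated hypothesis, but harmless, since the divergence lemma does not use linearity. What your route buys is the elimination of all analytic apparatus --- no domain, no boundary behaviour, no bump functions, no fundamental lemma --- at the cost of the multi--index bookkeeping; the paper's route is shorter and consistent with its declared program of using only classical analysis. Your first paragraph already settles the second assertion $d[f]=\sum\frac{d}{dx_i}G_i$ correctly as an immediate consequence of $\varepsilon^j=0$.
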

\begin{proof} Assuming (\ref{eqn1.4}), substituting $w^j=w^j(x_1,\ldots,x_n,t)$ and denoting
\[dx=dx_1\wedge\cdots\wedge dx_n,\ dx^i=-(-1)^idx_1\wedge\cdots\wedge dx_{i-1}\wedge dx_{i+1}\wedge\cdots\wedge dx_n,\]
we obtain identity
\[\int_\Omega\frac{d}{dt}fdx=\int_\Omega(e[f]+d[f])dx=\sum\int_\Omega e^jw^j_tdx+\sum\int_{\partial\Omega}G_idx^i\]
by integration over a~domain $\Omega$ in the space of independent variables. Therefore
\[\sum\int_\Omega(e^j-e^j[f])w^j_tdx+\sum\int_{\partial\Omega}(G_i-F_i)dx^i=0\]
identically, for all variations.
The classical argument may be applied: if variations $w^j_t=w^j_t(x_1,\ldots,x_n,t)$ are vanishing near boundary $\partial\Omega,$ the second summand disappears and this implies the desired result. \end{proof}
\begin{prop}[divergence]\label{prop1.2}
We claim that $e[f]=0$ if and only if \[f=\sum\frac{d}{dx_i}f_i\] where $f_1,\ldots,f_n$ are appropriate functions of variables $(\ref{eqn1.1}).$
\end{prop}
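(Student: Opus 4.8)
The plan is to treat the two implications separately: the ``if'' part follows at once from the uniqueness Proposition~\ref{prop1.1}, while the ``only if'' part rests on a scaling homotopy extracted from identity~(\ref{eqn1.2}). For the easy direction I would assume $f=\sum\frac{d}{dx_i}f_i$ with each $f_i$ a function of the variables~(\ref{eqn1.1}), and use the commutativity $[d/dx_i,d/dt]=0$ of total derivatives to write
\[\frac{d}{dt}f=\sum\frac{d}{dx_i}\Big(\frac{d}{dt}f_i\Big),\qquad \frac{d}{dt}f_i=\sum w^j_{It}(f_i)_{w^j_I}.\]
Since each $\frac{d}{dt}f_i$ is linear in the variations, this displays $\frac{d}{dt}f$ in the shape~(\ref{eqn1.4}) with all $e^j=0$; the uniqueness statement then forces $e^j[f]=0$ for every $j$, that is $e[f]=0$.

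Conversely, assume $e[f]=0$. Then~(\ref{eqn1.2}) collapses to $\frac{d}{dt}f=\sum\frac{d}{dx_i}F_i$ with the $F_i$ linear in the variations, so I may write $F_i=\sum w^j_{It}\,G_i^{jI}$ with $G_i^{jI}$ functions of the variables~(\ref{eqn1.1}). I would then substitute the one--parameter family $w^j=t\,W^j(x_1,\dots,x_n)$, under which $w^j_I$ becomes $t\,\partial W^j/\partial x_I$ and $w^j_{It}$ becomes $\partial W^j/\partial x_I$, and total derivatives turn into ordinary partial derivatives. The reduced identity becomes
\[\frac{\partial}{\partial t}\,f\big(x,t\,\tfrac{\partial W^j}{\partial x_I}\big)=\sum_i\frac{\partial}{\partial x_i}\,\mathcal F_i(x,t),\]
where $\mathcal F_i$ denotes $F_i$ evaluated along the family.

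Integrating over $t\in[0,1]$, applying the fundamental theorem of calculus on the left and differentiation under the integral sign on the right, I obtain
\[f\big(x,\tfrac{\partial W^j}{\partial x_I}\big)-f(x,0)=\sum_i\frac{\partial}{\partial x_i}\Big(\sum_{j,I}\tfrac{\partial W^j}{\partial x_I}\int_0^1 G_i^{jI}\big(x,t\,\tfrac{\partial W^k}{\partial x_K}\big)\,dt\Big).\]
The bracketed expression is exactly the substitution of the differential function $f_i:=\sum_{j,I}w^j_I\int_0^1 G_i^{jI}(x,t\,w)\,dt$, which is smooth and depends on finitely many coordinates~(\ref{eqn1.1}) because integrating a smooth integrand over $[0,1]$ preserves these properties. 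Since the jets $\partial W^j/\partial x_I$ at a point may be prescribed arbitrarily, an identity valid after every such substitution already holds among functions of the variables~(\ref{eqn1.1}); hence $f-f|_{w=0}=\sum_i\frac{d}{dx_i}f_i$. Finally $f|_{w=0}$ depends on $x_1,\dots,x_n$ alone and is trivially a divergence (use $\int^{x_1}f|_{w=0}\,dx_1$ as its first component), so absorbing it yields $f=\sum_i\frac{d}{dx_i}f_i$ as required.

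I expect the main obstacle to be precisely this passage from the per--substitution identity to a genuine identity among differential functions, hand in hand with checking that the homotopy primitive $f_i$ is a legitimate function of finitely many variables~(\ref{eqn1.1}). The two commutations invoked, namely $[d/dx_i,d/dt]=0$ and the exchange of $\int_0^1 dt$ with $\partial/\partial x_i$, are routine but should be recorded, as should the elementary fact that every function of $x_1,\dots,x_n$ alone is a total divergence.
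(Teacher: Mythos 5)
Your argument is correct and follows essentially the same route as the paper: the easy direction via the uniqueness Proposition~\ref{prop1.1}, and the converse via the scaling homotopy in identity~(\ref{eqn1.2}) followed by integration over $t\in[0,1]$ and absorption of the $t=0$ term (a function of $x_1,\dots,x_n$ alone) as a divergence. The only cosmetic difference is that you take the base point $c^j=0$ where the paper allows arbitrary fixed functions $c^j(x_1,\dots,x_n)$, and you spell out more explicitly the passage from the per--substitution identity back to an identity among functions of the variables~(\ref{eqn1.1}).
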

\begin{proof} Assuming $e[f]=0,$ let us insert functions
\begin{equation}\label{eqn1.5} tw^j+(1-t)c^j\quad (j=1,\ldots,m;\,c^j=c^j(x_1,\ldots,x_n))\end{equation}
($c^j$ may be arbitrary but fixed functions) for variables $w^j$ into identity (\ref{eqn1.2}). We obtain
\begin{equation}\label{eqn1.6} f|_{t=1}-f|_{t=0}=\int_0^1\frac{d}{dt}f\,dt=\int_0^1\sum\frac{d}{dx_i}F_i\,dt=\sum\frac{d}{dx_i}\int_0^1F_i\,dt\end{equation}
by subsequent integration where the first term $(t=1)$ is identical with the original function $f,$ the second term $(t=0)$ is a~certain divergence
\[c=c(x_1,\ldots,x_n)=\frac{d}{dx_1}\int c\,dx_1\] and the right--hand integrals are functions of variables (\ref{eqn1.1}).

The converse is easier since trivially
\[\frac{d}{dt}f=\frac{d}{dt}\sum\frac{d}{dx_i}f_i=\sum\frac{d}{dx_i}\frac{d}{dt}f_i\]
and the uniqueness implies $e[f]=0.$ \end{proof}
\begin{prop}[Helmholz]\label{prop1.3}
Let $e^1,\ldots,e^m$ be functions of variables $(\ref{eqn1.1}).$ Identity
\begin{equation}\label{eqn1.7} e[F]=0\quad (F=\sum e^jw^j_t)\end{equation} in the extended jet space is satisfied if and only if
\begin{equation}\label{eqn1.8} e^j=e^j[f]\quad (j=1,\ldots,m)\end{equation} for appropriate Lagrange function $f$ of variables $(\ref{eqn1.1}).$ \end{prop}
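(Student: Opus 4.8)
I read $e[F]$ as the Euler--Lagrange covector of $F=\sum_j e^jw^j_t$ computed on the \emph{extended} jet space, i.e. with $t=x_{n+1}$ promoted to a genuine independent variable, so that $e^\ell[F]=\sum_J(-1)^{|J|}\frac{d}{dx_J}F_{w^\ell_J}$ with $J$ ranging over multi--indices in all of $x_1,\dots,x_n,t$. With this reading the operators $e[\cdot],d[\cdot]$ and Propositions \ref{prop1.1}--\ref{prop1.2} all acquire evident $(n+1)$--variable analogues, proved verbatim with a fresh homotopy parameter, and I would lean on these throughout. Morally $e[F]=0$ is the self--adjointness (Helmholz) condition for the linearization of $(e^j)$, but the cleaner route, staying inside the paper's toolbox, is to read $e[F]=0$ as ``$F$ is a total divergence on the extended space.''

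For the easy implication I start from the master identity (\ref{eqn1.2}) written for the postulated $f$: $\frac{d}{dt}f=\sum_j e^j[f]w^j_t+\sum_{i=1}^n\frac{d}{dx_i}F_i=F+\sum_{i=1}^n\frac{d}{dx_i}F_i$. Solving for $F$ exhibits it as a total divergence on the extended space, $F=\frac{d}{dx_{n+1}}f-\sum_{i=1}^n\frac{d}{dx_i}F_i$, whence the extended Proposition \ref{prop1.2} yields $e[F]=0$ at once.

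For the converse I run the same reasoning backwards, with a grading to control it. Assuming $e[F]=0$, the extended Proposition \ref{prop1.2} furnishes $h_1,\dots,h_{n+1}$ with $F=\sum_{i=1}^{n+1}\frac{d}{dx_i}h_i$. Now grade the extended coordinates by the number of $t$--indices they carry: $F=\sum_j e^jw^j_t$ is homogeneous of degree $1$, each $\frac{d}{dx_i}$ with $i\le n$ preserves the degree, and $\frac{d}{dt}$ raises it by one. Extracting the degree--one part of the divergence identity, I may take $f:=h_{n+1}$ homogeneous of degree $0$ (hence a function of the variables (\ref{eqn1.1}) alone) and $h_1,\dots,h_n$ of degree $1$, so that $F=\frac{d}{dt}f+\sum_{i=1}^n\frac{d}{dx_i}h_i$. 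Comparing with (\ref{eqn1.2}) for this very $f$ gives $\sum_j(e^j-e^j[f])w^j_t=\sum_{i=1}^n\frac{d}{dx_i}K_i$ for suitable $K_i$; since $e^j-e^j[f]$ are variation--free, the uniqueness principle of Proposition \ref{prop1.1} forces $e^j=e^j[f]$. A cosmetic last step replaces $f$ by $f|_{t=0}$ to delete any residual explicit $t$: the spatial operators defining $e^j[\cdot]$ commute with this substitution and $e^j$ is already $t$--free, so still $e^j[f|_{t=0}]=e^j$.

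The two places deserving real care are the following. First, the $(n+1)$--variable analogues of Propositions \ref{prop1.1}--\ref{prop1.2} must be stated explicitly, since I apply them with $t$ as an independent variable and with coefficients allowed to depend on the new variations; this is routine but not automatic. Second, and this is the genuine crux, the grading bookkeeping in the converse: I must verify that each spatial $\frac{d}{dx_i}$ really preserves the $t$--degree while $\frac{d}{dt}$ raises it, so that the degree--one component of $\sum_i\frac{d}{dx_i}h_i$ isolates exactly one $t$--divergence carrying a degree--$0$ potential. Once that grading is pinned down, the uniqueness Proposition does the remaining work and no direct wrestling with the explicit Helmholz expressions is needed.
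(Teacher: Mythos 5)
Your overall architecture coincides with the paper's: the forward implication is exactly the paper's argument (rewrite (\ref{eqn1.2}) as $F=\frac{d}{dt}f-\sum\frac{d}{dx_i}F_i$, so $F$ is a total divergence on the extended space and the extended divergence proposition gives $e[F]=0$), and in the converse you likewise invoke the extended Proposition~\ref{prop1.2} to write $F$ as a divergence and finish with uniqueness. The genuine gap is in the step you yourself flag as the crux: the grading. The total derivative $\frac{d}{dt}=\frac{\partial}{\partial t}+\sum w^j_{It}\frac{\partial}{\partial w^j_I}+\sum w^j_{Itt}\frac{\partial}{\partial w^j_{It}}+\cdots$ is \emph{not} homogeneous of degree $+1$ for your grading: its summand $\partial/\partial t$ (differentiation with respect to the explicit occurrence of $t$) preserves the degree. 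Consequently the degree-one component of $\frac{d}{dt}h_{n+1}$ is $\frac{\partial}{\partial t}h_{n+1}^{(1)}+\sum \bigl(h_{n+1}^{(0)}\bigr)_{w^j_I}w^j_{It}$, and the extra term $\frac{\partial}{\partial t}h_{n+1}^{(1)}$ --- linear in the variations but in general neither of the form $\sum f_{w^j_I}w^j_{It}$ for any potential $f$ nor a spatial divergence --- is silently discarded in your extraction. For the same reason ``homogeneous of degree $0$'' does not mean ``a function of the variables (\ref{eqn1.1}) alone'': explicit $t$-dependence carries degree $0$, and your closing substitution $t=0$ is applied only after the flawed extraction, so it cannot repair it. The potentials delivered by the extended Proposition~\ref{prop1.2} do in general depend explicitly on $t$ (they are built by a homotopy whose reference functions live over $x_1,\ldots,x_n,t$), so this is not a vacuous worry.

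The paper closes exactly this gap with a different device: into $F=\sum\frac{d}{dx_i}G_i+\frac{d}{dt}G$ it inserts the substitution (\ref{eqn1.5}), $w^j\mapsto tw^j+(1-t)c^j$, and integrates over $t\in[0,1]$. The fundamental theorem of calculus turns $\int_0^1\frac{d}{dt}G\,dt$ into $G|_{t=1}-G|_{t=0}$, the term $G|_{t=1}$ is manifestly a function of the variables (\ref{eqn1.1}) only, and uniqueness (Proposition~\ref{prop1.1}) then yields $e^j=e^j[G]$. To salvage your route you would have to verify that the potentials produced by the extended divergence proposition can be chosen free of explicit $t$ --- which requires tracking the homotopy construction itself, not just the grading --- or else switch to the paper's integration-over-$t$ argument.
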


Before passing to the proof, let us discuss identity $(\ref{eqn1.7}).$ In the \emph{extendend jet space}, we have Euler--Lagrange expressions
\[e^j[F]=\sum (-1)^r\frac{d}{dx_K}F_{w^j_K}\qquad (j=1,\ldots,m;\, K=k_1\cdots k_r;\,r=0,1,\ldots\,)\]
where $k_1,\ldots,k_r=1,\ldots,n+1$ (and $k_1\leq\cdots\leq k_{n+1}).$ We prefer notation $x_{n+1}$ for the parameter $t$ at this place. Clearly
\[e^j[F]=\sum F^j_Iw^j_{It}\] where $F^j_I$ are functions only of variables (\ref{eqn1.1}). 	It follows that identity (\ref{eqn1.7}) is equivalent to the \emph{Helmholz conditions}
\begin{equation}\label{eqn1.9} F^j_I=0\quad (\text{all }j\text{ and }I)\end{equation} for the given functions $e^1,\ldots,e^m.$
\begin{proof} Identity (\ref{eqn1.2}) reads
\[F=\frac{d}{dt}f-\sum\frac{d}{dx_i}F_i\quad (F=e[f]=\sum e^j[f]w^j_t),\]
hence $e[F]=0$ if $F$ is regarded as a~Lagrange function in the extended jet space and Proposition~\ref{prop1.1} is applied.

Let us conversely assume (\ref{eqn1.7}), therefore
\begin{equation}\label{eqn1.10} F=\sum\frac{d}{dx_i}G_i+\frac{d}{dt}G\end{equation}
for appropriate functions $G_i$ and $G$ if Proposition~\ref{prop1.2} is applied in extended jet space. Subsequent substitution (\ref{eqn1.5}) and integration yields the identity
\[\int_0^1Fdt=\sum\frac{d}{dx_i}\int_0^1G_i\,dt+G|_{t=1}-G|_{t=0}\]
analogous to (\ref{eqn1.6}). It follows that $G$ (the term $t=1$) depends only on coordinates (\ref{eqn1.1}). Applying uniqueness to (\ref{eqn1.10}), we conclude that $F=e[G].$\end{proof}
\begin{prop}[Tonti]\label{prop1.4} Let us introduce the Lagrange function
\begin{equation}\label{eqn1.11}\tilde f=\int_0^1e[f]\,dt=\sum\int_0^1e^j[f]\,dt\,(w^j-c^j)\end{equation}
where variables $(\ref{eqn1.5})$ were inserted into the integral. Then $e[\tilde f]=e[f].$
\end{prop}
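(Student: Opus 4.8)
The plan is to recover $f$ from its Euler--Lagrange expressions by the very homotopy integration already used to prove Proposition~\ref{prop1.2}, and then to invoke the linearity of the operator $e[\,\cdot\,]$.

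First I would substitute the variables (\ref{eqn1.5}) into the fundamental identity (\ref{eqn1.2}) and integrate over $t\in[0,1]$, exactly as in (\ref{eqn1.6}), but now without assuming $e[f]=0$. Under this substitution one has $w^j_t=w^j-c^j$, so the term $e[f]=\sum e^j[f]w^j_t$ becomes $\sum e^j[f]\,(w^j-c^j)$ with each $e^j[f]$ evaluated along (\ref{eqn1.5}); since the factor $w^j-c^j$ is independent of $t$, integration produces precisely $\int_0^1 e[f]\,dt=\tilde f$. The left--hand side $\int_0^1\frac{d}{dt}f\,dt$ telescopes to $f|_{t=1}-f|_{t=0}=f-f|_{t=0}$, and the divergence term integrates to $\int_0^1 d[f]\,dt=\sum\frac{d}{dx_i}\int_0^1 F_i\,dt$, the total derivative $\frac{d}{dx_i}$ commuting with $\int_0^1\,dt$ as in (\ref{eqn1.6}).

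Collecting the three contributions I would obtain
\[f=\tilde f+f|_{t=0}+\sum\frac{d}{dx_i}\int_0^1 F_i\,dt.\]
Here $f|_{t=0}$ arises from $f$ by replacing every $w^j_I$ with the corresponding derivative of $c^j$, hence it is a function of the independent variables $x_1,\ldots,x_n$ alone and, as noted in the proof of Proposition~\ref{prop1.2}, is itself a divergence $\frac{d}{dx_1}\int f|_{t=0}\,dx_1$. Consequently $f-\tilde f$ is a sum of divergences $\sum\frac{d}{dx_i}g_i$, and the easy (converse) direction of Proposition~\ref{prop1.2} yields $e[f-\tilde f]=0$.

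Finally, the operator $e[\,\cdot\,]$ is linear in its argument, since each $e^j[f]=\sum(-1)^r\frac{d}{dx_I}f_{w^j_I}$ depends linearly on $f$; therefore $e[f]-e[\tilde f]=e[f-\tilde f]=0$, which is the asserted equality $e[\tilde f]=e[f]$. I expect the only delicate point to be the bookkeeping of the substitution (\ref{eqn1.5}) inside the integral --- verifying that the $t$--independent factors $w^j-c^j$ may be pulled out of $\int_0^1\,dt$ and that this integration commutes with the total derivatives $\frac{d}{dx_i}$ --- but this is exactly the mechanism already validated in (\ref{eqn1.6}), so no genuinely new difficulty arises.
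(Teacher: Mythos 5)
Your proposal is correct and follows essentially the same route as the paper: substitute (\ref{eqn1.5}) into the identity (\ref{eqn1.2}), integrate over $t\in[0,1]$ to obtain $f=\tilde f+(\text{divergences})$, and then conclude via Propositions~\ref{prop1.1} and~\ref{prop1.2}. Your closing step (linearity of $e[\,\cdot\,]$ plus the easy direction of Proposition~\ref{prop1.2} applied to $f-\tilde f$) is only a cosmetic variant of the paper's final appeal to uniqueness after differentiating in $t$.
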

\begin{proof} Equation (\ref{eqn1.2}) together with substitution (\ref{eqn1.5}) and integration implies
\[f|_{t=1}-f|_{t=0}=\int_0^1e[f]\,dt+\sum\frac{d}{dx_i}\int_0^1F_i\,dt=\tilde f+\sum\frac{d}{dx_i}\int_0^1F_i\,dt.\]
The first summand ($t=1$) is identical with the original function $f,$ the second one ($t=0$) is a~mere function $c=c(x_1,\ldots,x_n)$ of independent variables. It follows that \[\frac{d}{dt}f=\frac{d}{dt}\tilde f+\frac{d}{dt}\sum\frac{d}{dx_i}f_i=\frac{d}{dt}\tilde f+\sum\frac{d}{dx_i}\frac{df_i}{dt}\quad (f_i=\int_0^1F_i\,dt)\] and the proof is done by applying Propositions~\ref{prop1.2} and~\ref{prop1.1}.\end{proof}

We have in principle resolved the \emph{exact inverse problem}: if $e^1=\cdots=e^n=0$ is a~given system of differential equations, the \emph{Helmholz conditions} (\ref{eqn1.9}) are necessary and sufficient for the existence of a~(uncertain at this place) Lagrange function $f$ satisfying (\ref{eqn1.8}). Then the \emph{Tonti integral} (\ref{eqn1.11}) provides the explicit solution of the problem since $e[\tilde f]=e[f].$ Alas, this is a~very ``degenerate'' solution.
In more detail, there are obvious inequalities
\[\text{order }\tilde f\leq \text{ order }e^j[f]\leq 2\text{ order }f\]
where the possible existence of more interesting ``nondegenerate'' Lagrange function $f$ is not yet ensured.
The actual literature to this problem rests on rigid mechanisms of the jet spaces completed with the theory of the Poincar\'e--Cartan forms and variational bicomplex. Our next aim is to propose more flexible and elementary method within the framework of the classical analysis.

Interrelations of the above Propositions~\ref{prop1.1}--\ref{prop1.4} and the common geometrical approach are not quite clear and
can be \emph{informally} stated as follows. We introduce \emph{vector fields} $Z$ and \emph{contact forms} $\omega^j_I$ such that \[Z=\sum w^j_{It}\frac{\partial}{\partial w^j_I},\ \omega^j_I=dw^j_I-\sum w^j_{Ii}dx_i\,.\]
Let $\Omega$ be the \emph{contact module} of all forms $\sum a^j_I\omega^j_I$ (finite sum) and $\varphi$ the \emph{Poincar\'e--Cartan form} to the Lagrange function $f$ defined by the properties
\[\varphi=fdx+\sum a^j_{Ii}\,\omega^j_I\wedge dx^i,\ d\varphi\cong\sum e^j[f]\omega^j\wedge dx\ (\text{mod }\Omega\wedge\Omega).\]
There are two formulae
\[\mathcal L_Z\varphi\cong\sum f_{w^j_I}w^j_{It}dx,\ \mathcal L_Z\varphi\cong\left(\sum e^j[f]w^j_t+\sum\frac{d}{dx_i}\sum a^j_{Ii}w^j_I\right)dx\quad (\text{mod }\Omega)\]
for the Lie derivative obtained either directly or by the rule $\mathcal L_Z=Z\rfloor d+dZ\rfloor$ and it follows that
\[\frac{d}{dt}f=\sum f_{w^j_I}w^j_{It}=\sum e^j[f]w^j_t+\sum\frac{d}{dx_i}\sum a^j_{Ii}w^j_I=e[f]+d[f].\]
We have obtained the crucial equation (\ref{eqn1.2}). It is however not easy to establish the geometrical sense of this \emph{formal procedure} where the Lagrange function $f,$ the form $d\varphi$ (represented by $e[f]$) and the form $\varphi$ (represented by $d[f]$) are related in one equation.

For better clarity, let us mention the ``introductory'' Lagrange function $f=f(x,y,y')$ of the classical calculus of variations where the jet coordinates are replaced with the common notation for this moment. The Euler--Lagrange expression
\[e^1[f]=f_y-(f_{y'})'=f_y-f_{y'x}-f_{y'y}y'-f_{y'y'}y''=E(x,y,y',y'')\]
provides the Tonti integral
\[\tilde f=\int_0^1E\left(x,ty+(1-t)c,ty'+(1-t)c',ty''+(1-t)c''\right)dt\,(y-c)\]
where $c=c(x)$ is a~fixed (in principle arbitrary) function. If function $E$ is given in advance, then $\tilde f$ essentially differs from the primary function $f.$ For instance, if we choose elementary
\[f=\frac{e^y}{y'},\ e^1[f]=2\frac{e^y}{y'}\left(1-\frac{y''}{(y')^2}\right)\]
then the Tonti integral
\[\tilde f=2\int_0^1\frac{e^{ty+(1-t)x}}{ty'+(1-t)}\left(1-\frac{ty''}{(ty'+1-t)^2}\right)\,dt\,(y-x)\]
\[=2\int_1^{y'}e^{(\tau-1)\frac{y-x}{y'-1}+x}\left(1-\frac{\tau-1}{\tau^2}\frac{y''}{y'-1}\right)\frac{d\tau}{\tau}\,\frac{y-x}{y'-1}\]
(where $c=c(x)=x$) is a~terrible higher transcendence. We will propose another method here which is as follows. If the function $E$ is given, the second derivative $f_{y'y'}$ of the sought function $f$ (the coefficient of $y''$ in $E$) is well--known, therefore the original Lagrange function $f$ is uniquely determined modulo  linear correction $A(x,y)y'+B(x,y).$ Appropriate choice of this correction ensures the remaining summands $f_y-f_{y'x}-f_{y'y}y'$ of a~given function $E$ and easily provides the ``good'' solution, see below.

The aim of this article is twofold. First, we deal with the problem whether a~second--order system of \emph{partial} differential equations is \emph{identical} with the Euler--Lagrange system of a~\emph{first--order} variational integral. This is a~highly difficult problem and though we propose a~general strategy of the solution, a~reasonable result is achieved only for the case of two dependent variables at most. Second, we aim to provide short and elementary approach to some well--known actual achievements. As a~result, they turn into (almost) trivialities, see also the Appendix. We conclude that the direct elementary methods can be more effective than the sophisticated theories.
\section{General strategy}\label{sec2}
The jet variables (\ref{eqn1.1}) are enough from now on. We are going to deal systematically with the first--order Lagrange function $f=f(\cdot\cdot,x_i,w^j,w^j_i.\cdot\cdot).$ Then the Euler--Lagrange expressions are
\[e^j[f]=E^j[f]-\sum E^{jj'}_{ii'}[f]w^{j'}_{ii'}\quad (j=1,\ldots,m)\] where
\[E^j[f]=f_{w^j}-\sum f_{w^j_ix_i}-\sum f_{w^j_iw^{j'}}w^{j'}_i,\ E^{jj'}_{ii'}[f]=\frac{1}{2}\left(f_{w^j_iw^{j'}_{i'}}+ f_{w^j_{i'}w^{j'}_i}\right).\]
The exact inverse problem is as follows. Let
\[F^{jj'}_{ii'}(\cdot\cdot), F^j(\cdot\cdot)\qquad (i,i'=1,\ldots,n;\, j,j'=1,\ldots,m;\, (\cdot\cdot)=(\cdot\cdot,x_i,w^j,w^j_i,\cdot\cdot))\]
be given functions. We ask the question whether the requirements
\begin{equation}\label{eqn2.1} F^{jj'}_{ii'}=E^{jj'}_{ii'}[f],\ F^j=E^j[f]\quad (i,i'=1,\ldots,n;\,j,j'=1,\ldots,m) \end{equation}
are satisfied for an~appropriate first--order Lagrange function $f.$

In order to simplify the notation, we abbreviate
\[f^j_i=f_{w^j_i},\ f^{jj'}_{ii'}=f_{w^j_iw^{j'}_{i'}},\ \ldots, \ f^j_{ix_{i'}}=f_{w^j_ix_{i'}}',\ \ldots\]
from now on. Let us moreover introduce the auxiliary functions \[G^{jj'}_{ii'}=\frac{1}{2}\left(f^{jj'}_{ii'}-f^{jj'}_{i'i}\right)=\frac{1}{2}\left(f_{w^j_iw^{j'}_{i'}}-f_{w^j_{i'}w^{j'}_i}\right).\]
Then
\begin{equation}\label{eqn2.2} f^{jj'}_{ii'}=F^{jj'}_{ii'}+G^{jj'}_{ii'}\quad (i,i'=1,\ldots,n;\, j,j'=1,\ldots,m). \end{equation}
The symmetry properties
\begin{equation}\label{eqn2.3} F^{jj'}_{ii'}=F^{jj'}_{i'i}=F^{j'j}_{i'i}=F^{j'j}_{ii'},\ G^{jj'}_{ii'}=-G^{jj'}_{i'i}=G^{j'j}_{i'i}=-G^{j'j}_{ii'} \end{equation}
are postulated.

We are passing to the topic proper.

\bigskip

The \emph{first requirement} (\ref{eqn2.1}) is equivalent to the Pfaffian system
\[d_1f=\sum f^j_idw^j_i,\ d_1f^j_i =\sum f^{jj'}_{ii'}dw^{j'}_{i'}\quad (f^{jj'}_{ii'}=F^{jj'}_{ii'}+G^{jj'}_{ii'})\]
where differential $d_1$ is applied only to the first--order variables. Due to the symmetry properties (\ref{eqn2.3}), the first Pfaffian equation is always solvable. The second system of the Pfaffian equations is solvable if and only if 
\[f^{jj'j''}_{ii'i''}=\left(f^{jj'}_{ii'}\right)_{w^{j''}_{i''}}=\left(f^{jj''}_{ii''}\right)_{w^{j'}_{i'}}=f^{jj''j'}_{ii''i'}\] 
hence
\begin{equation}\label{eqn2.4} (F^{jj'}_{ii'}+G^{jj'}_{ii'})_{w^{j''}_{i''}}=(F^{jj''}_{ii''}+G^{jj''}_{ii''})_{w^{j'}_{i'}}\quad (i,i'=1,\ldots,n;\,j=1,\ldots,m), \end{equation} where $F^{jj'}_{ii'}$ are given but $G^{jj'}_{ii'}$ unknown functions.
The first requirement (\ref{eqn2.1}) is regarded as clarified for this moment.

Turning to the \emph{second requirement} (\ref{eqn2.1}), it implies
\begin{equation}\label{eqn2.5} (F^j)_{w^{j'}_{i'}}=f^{j'}_{i'w^j}-\sum f^{jj'}_{ii'x_i}-\sum f^{jj'}_{ii'w^{j''}}w^{j''}_i-f^j_{i'w^{j'}}.
\end{equation}
Denoting
\begin{equation}\label{eqn2.6} F^{j'j}_{i'}=(F^j)_{w^{j'}_{i'}}+\sum f^{jj'}_{ii'x_i}+\sum f^{jj'}_{ii'w^{j''}}w^{j''}_i, \end{equation}
there are obvious identities
\begin{equation}\label{eqn2.7} F^{j'j}_{i'}+F^{jj'}_{i'}=0,\  (F^{j'j}_{i'})_{w^{j''}_{i''}}= f^{j'j''}_{i'i''w^j}-f^{jj''}_{i'i''w^{j'}}.
 \end{equation}
Due to (\ref{eqn2.2}), every function (\ref{eqn2.6}) can be expressed in terms of functions
\begin{equation}\label{eqn2.8} F^j,\ F^{jj'}_{ii'},\ G^{jj'}_{ii'}\quad (i,i'=1,\ldots,n;\, j,j'=1,\ldots,m). \end{equation}
Identities (\ref{eqn2.7}) can be expressed in terms of functions  (\ref{eqn2.8}) as well and therefore may be regarded as necessary solvability conditions for the second requirement (\ref{eqn2.1}). Alas, they are not sufficient.

Indeed, assume the second identity (\ref{eqn2.7}). Then
\begin{equation}\label{eqn2.9} F^{j'j}_{i'}=f^{j'}_{i'w^j}-f^j_{i'w^{j'}}+B^{j'j}_{i'} \end{equation}
where \[B^{j'j}_{i'}=B^{j'j}_{i'}(\cdot\cdot,x_i,w^{j''},\cdot\cdot)\quad ({i'}=1,\ldots,n;\, j,j'=1,\ldots,m)\] are appropriate functions. With this result, the definition equation (\ref{eqn2.6}) reads
\[(F^j)_{w^{j'}_{i'}}=F^{j'j}_{i'}-\sum f^{jj'}_{ii'x_i}-\sum f^{jj'}_{ii'w^{j''}}w^{j''}_i=\frac{\partial}{\partial w^{j'}_{i'}}E^j[f]+B^{j'j}_{i'}\]
(direct verification) and therefore
\begin{equation}\label{eqn2.10} F^j=E^j[f]+\sum B^{j'j}_{i'}(w^{j'}_{i'}-c^{j'}_{i'})+B^j \end{equation}
where \[B^j=B^j(\cdot\cdot,x_i,w^{j'},\cdot\cdot)\quad (j=1,\ldots,m)\] are appropriate functions. The functions
\begin{equation}\label{eqn2.11} c^j_i=c^j_i(\cdot\cdot,x_{i'},w^{j'},\cdot\cdot)\qquad (i=1,\ldots,n;\, j=1,\ldots,m) \end{equation}
are fixed and may be arbitrarily chosen in advance. The second requirement (\ref{eqn2.1}) is satisfied if and only if $B^{j'j}_{i'}=B^j=0$ identically and this goal can be achieved as follows. 

Identity (\ref{eqn2.2}) may be regarded as a~system of differential equations
\begin{equation}\label{eqn2.12} \frac{\partial^2 f}{\partial w^j_i\partial w^{j'}_{i'}}=F^{jj'}_{ii'}+G^{jj'}_{ii'}\quad (i,i'=1,\ldots,n;\, j,j'=1,\ldots,m\,). \end{equation}
If $\bar f=\bar f(\cdot\cdot,x_i,w^j,w^j_i,\cdot\cdot)$ is the (unique) particular solution such that
\begin{equation}\label{eqn2.13} \bar f=\frac{\partial \bar f}{\partial w^j_i}=0\ (i=1,\ldots,n;\,j=1,\ldots,m)\quad \text{if}\quad w^1_1=c^1_1,\ldots, w^m_n=c^m_n \end{equation}
then the general solution is
\begin{equation}\label{eqn2.14} f=\bar f+\sum A^j_i(w^j_i-c^j_i)+A \end{equation}
where \[A^j_i=A^j_i(\cdot\cdot,x_{i'},w^{j'},\cdot\cdot), A=A(\cdot\cdot,x_{i'},w^{j'},\cdot\cdot)\quad (i=1,\ldots,n;\,j=1,\ldots,m)\] are arbitrary functions.
One can then see that identity (\ref{eqn2.9}) with $B^{jj'}_{i'}=0$ is equivalent to the equation
\[F^{j'j}_{i'}=(A^{j'}_{i'})_{w^j}-(A^j_{i'})_{w^{j'}}\quad \text{if}\quad w^1_1=c^1_1,\ldots, w^m_n=c^m_n\] for the coefficients $A^j_i.$
With the latent use of the first identity (\ref{eqn2.7}), this is expressed by the equation
\begin{equation}\label{eqn2.15} \sum F^{j'j}_{i'}dw^j\wedge dw^{j'}=2d_0\sum A^{j'}_{i'}dw^{j'}\quad \text{if}\quad w^1_1=c^1_1,\ldots, w^m_n=c^m_n \end{equation}
where differential $d_0$ is applied to variables $w^1,\ldots,w^m.$ Due to the Poincar\'e Lemma, we have necessary and sufficient condition
\begin{equation}\label{eqn2.16} d_0\sum F^{j'j}_{i'}dw^j\wedge dw^{j'}=0\ (i'=1,\ldots,n)\quad \text{if}\quad w^1_1=c^1_1,\ldots, w^m_n=c^m_n \end{equation}
for the existence of functions $A^j_i.$ Quite analogously, assuming already $B^{j'j}_{i'}=0,$ equation (\ref{eqn2.10}) with $B^j=0$ is ensured if and only if
\[F^j=A_{w^j}-\sum (A^j_i)_{x_i}-\sum (A^j_i)_{w^{j''}}w^{j''}_i\quad \text{if}\quad w^1_1=c^1_1,\ldots, w^m_n=c^m_n. \]
This is equivalent to the identity
\begin{equation}\label{eqn2.17} \sum\left\{F^j+\sum(A^j_i)_{x_i}+\sum(A^j_i)_{w^{j''}}c^{j''}_i\right\}dw^j=d_0A
\end{equation}
and we have necessary and sufficient condition
\begin{equation}\label{eqn2.18} d_0\sum\left\{\cdots\right\}dw^j=0\quad \text{if}\quad w^1_1=c^1_1,\ldots, w^m_n=c^m_n\end{equation}
for the existence of function $A.$

If (in principle arbitrary) functions (\ref{eqn2.11}) depend only on variables $x_1,\ldots,x_n\,,$ condition (\ref{eqn2.18}) can be expressed without the use of coefficients $A^j_i.$ Indeed, we may substitute
\[\begin{array}{l}d_0\sum(A^j_i)_{x_i}dw^j=\dfrac{\partial}{\partial x_i}d_0\sum A^j_idw^j,\\ d_0\sum(A^j_i)_{w^{j''}}c^{j''}_idw^j=c^{j''}_i\dfrac{\partial}{\partial w^{j''}}d_0\sum A^j_idw^j\end{array}\]
into (\ref{eqn2.18}) together with the use of (\ref{eqn2.15}) to obtain the concluding condition
\begin{equation}\label{eqn2.19}2d_0\sum F^jdw^j+\sum\left(\frac{\partial}{\partial x_i}+\sum c^{j''}_i\frac{\partial}{\partial w^{j''}}\right)\sum F^{j'j}_{i}dw^j\wedge dw^{j'}=0\end{equation}
which is equivalent to (\ref{eqn2.18}).

\medskip

{\bf Summary}: The first--order Lagrange function $f$ resolving the exact inverse problem (\ref{eqn2.1}) is given by (\ref{eqn2.14}) where $\bar f$ is a~particular solution of equation (\ref{eqn2.12}) satisfying the initial conditions (\ref{eqn2.13}) and coefficients $A^j_i, A$ satisfy (\ref{eqn2.15}) and (\ref{eqn2.17}). The necessary and sufficient solvability conditions for the existence of function $f$ are (\ref{eqn2.3}), (\ref{eqn2.4}), (\ref{eqn2.7}), (\ref{eqn2.16}) and (\ref{eqn2.18}) or (\ref{eqn2.19}). They are expressed in terms of functions (\ref{eqn2.8}) where $F^j$ and $F^{jj'}_{ii'}$ are given but $G^{jj'}_{ii'}$ are unknown.

\bigskip

This achievement looks rather involved than to be of any practical use, we nevertheless turn to particular examples in order to obtain more reasonable results. It is to be noted that analogous applications of the seemingly easier Helmholz conditions and the Tonti formula for the case $n>1$ of several independent variables are not available in actual literature.
\begin{remark}\label{rem1} \emph{The main idea of our strategy was as follows. The \emph{first requirement} (\ref{eqn2.1}) determines only the derivatives $f^{jj'}_{ii'}$ and we need the linear correction of $f$ to fulfil the \emph{second requirement} (\ref{eqn2.1}). Identities (\ref{eqn2.7}) appear after two differentiations. The original second requirement can be restored after two integrations  (\ref{eqn2.15}) and (\ref{eqn2.17}). Then the constants of integration determine just the linear correction of $f$ and we are done.}
\end{remark}

\section{The case of one dependent variable}\label{sec3}
We suppose $m=1.$ Let us abbreviate
\[w_I=w^1_I, f_i=f^1_i=f_{w^1_i}, f_{ii'}=f^{11}_{ii'}=f_{w^1_iw^1_{i'}}, \ldots\]
but the remaining notation is retained. Then
\[f=f(\cdot\cdot,x_i,w,w_i,\cdot\cdot),\ e[f]=e^1[f]w_t,\ e^1[f]=E^1[f]-\sum E^{11}_{ii'}[f]w_{ii'}\]
where
\[ E^{11}_{ii'}=f_{ii'},\ E^1[f]=f_w-\sum f_{ix_i}-\sum f_{iw}w_i.\]
The exact inverse problem
\begin{equation}\label{eqn3.1}  F^{11}_{ii'}= E^{11}_{ii'}[f]\quad (i,i'=1,\ldots,n),\ F^1=E^1[f] \end{equation}
simplifies since $ G^{11}_{ii'}=0.$ The \emph{first requirement} (\ref{eqn3.1}) is solvable if and only if
\begin{equation}\label{eqn3.2} (F^{11}_{ii'})_{w_{i''}}=(F^{11}_{ii''})_{w_{i'}}\qquad (i,i',i''=1,\ldots,n) \end{equation}
where the symmetry $F^{11}_{ii'}=F^{11}_{i'i}$ is supposed. Passing to the \emph{second requirement} (\ref{eqn3.1}), one can infer that the vanishing $F^{11}_{i'}=0$ follows from the first identity (\ref{eqn2.7}) and then the definition (\ref{eqn2.6}) turns into the solvability condition
\begin{equation}\label{eqn3.3} (F^1)_{w_{i'}}+\sum(F^{11}_{ii'})_{x_i}+\sum(F^{11}_{ii'})_ww_i=0\qquad (i'=1,\ldots,n). \end{equation}
The second identity (\ref{eqn2.7}) becomes trivial since $j=j'=j''=1.$ Let us turn to the existence of coefficients $A^1_i$ and $A.$ First of all, (\ref{eqn2.15}) is simplified to $d_0\sum A^1_i dw=0$ whence $A^1_i=A^1_i(x_1,\ldots,x_n,w)$ may be arbitrary functions. Therefore only one additional equation
\begin{equation}\label{eqn3.4} A_w=F^1+\sum(A^1_i)_{x_i}+\sum(A^1_i)_ww_i\quad \text{ if }\quad w_1=c^1_1,\ldots,w_n=c^1_n \end{equation}
equivalent to (\ref{eqn2.17}) is nontrivial. We conclude:
\begin{theorem}\label{th3.1} The exact inverse problem $(\ref{eqn3.1})$ admits a~first--order solution $f$ if and only if conditions $(\ref{eqn3.2})$ and $(\ref{eqn3.3})$ are satisfied. Then
\begin{equation}\label{eqn3.5} f=\bar f+\sum A^1_i(w_i-c^1_i)+A\end{equation}
where $\bar f=\bar f(\cdot\cdot,x_i,w,w_i,\cdot\cdot)$ is the unique solution of the initial problem
\[\frac{\partial^2\bar f}{\partial w_i\partial w_{i'}}=F^{11}_{ii'}, \bar f=\frac{\partial\bar f}{\partial w_i}=0\quad \text{if}\quad w_1=c^1_1,\ldots,w_n=c^1_n\quad (i,i'=1,\ldots,n).\]
Functions $c^1_1,\ldots,c^1_n$ of variables $x_1,\ldots,x_n,w$ are arbitrary (but fixed) and $A^1_i,A$ may be arbitrary functions of the same variables satisfying $(\ref{eqn3.4}).$
\end{theorem}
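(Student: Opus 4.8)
The plan is to establish Theorem~\ref{th3.1} as the specialization of the general strategy of Section~\ref{sec2} to the case $m=1$, verifying at each step that the general machinery degenerates exactly as claimed. First I would record that when $m=1$ the auxiliary functions $G^{jj'}_{ii'}$ vanish identically: the defining relation $G^{11}_{ii'}=\frac{1}{2}(f_{11}^{ii'}-f_{11}^{i'i})$ is antisymmetric in the upper index pair, which collapses to zero since there is only one dependent variable. Consequently the solvability condition (\ref{eqn2.4}) for the first requirement, which in general couples the unknown $G$'s to the given $F$'s, reduces to a condition involving only the given data $F^{11}_{ii'}$; this is precisely (\ref{eqn3.2}), the symmetry of the mixed partials $(F^{11}_{ii'})_{w_{i''}}=(F^{11}_{ii''})_{w_{i'}}$, under the postulated symmetry $F^{11}_{ii'}=F^{11}_{i'i}$ from (\ref{eqn2.3}).

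Next I would treat the second requirement. Because $G^{11}_{ii'}=0$, the definition (\ref{eqn2.6}) of $F^{j'j}_{i'}$ becomes $F^{11}_{i'}=(F^1)_{w_{i'}}+\sum_i (F^{11}_{ii'})_{x_i}+\sum_i (F^{11}_{ii'})_w w_i$. The first identity in (\ref{eqn2.7}), namely $F^{j'j}_{i'}+F^{jj'}_{i'}=0$, forces $F^{11}_{i'}=0$ when $j=j'=1$; substituting this into the expression just obtained yields exactly the solvability condition (\ref{eqn3.3}). I would then observe that the second identity in (\ref{eqn2.7}) is vacuous here, since all three upper indices equal $1$ and its right-hand side $f^{j'j''}_{i'i''w^j}-f^{jj''}_{i'i''w^{j'}}$ cancels. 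This disposes of the necessity direction: (\ref{eqn3.2}) and (\ref{eqn3.3}) are forced.

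For sufficiency and for the explicit formula I would invoke the construction (\ref{eqn2.12})--(\ref{eqn2.14}). The differential equation (\ref{eqn2.12}) reads $\partial^2 f/\partial w_i\partial w_{i'}=F^{11}_{ii'}$, and its solvability in $f$ is guaranteed precisely by (\ref{eqn3.2}) via the Poincar\'e Lemma applied in the $w_i$-variables; I would take $\bar f$ to be the unique solution satisfying the homogeneous initial conditions (\ref{eqn2.13}), giving the stated initial-value problem for $\bar f$. The general solution then has the form (\ref{eqn2.14}), which specializes to (\ref{eqn3.5}). It remains to fix the linear correction $A^1_i, A$. Here the wedge-form equation (\ref{eqn2.15}) degenerates dramatically: with $m=1$ the sum $\sum F^{j'j}_{i'}\,dw^j\wedge dw^{j'}$ has coincident differentials $dw\wedge dw=0$, so (\ref{eqn2.15}) reduces to $d_0\sum A^1_i\,dw=0$, imposing no real constraint and leaving the $A^1_i(x_1,\ldots,x_n,w)$ free. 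The only surviving nontrivial equation is (\ref{eqn2.17}), which with a single $dw$ becomes the scalar equation (\ref{eqn3.4}) for $A_w$.

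The main subtlety I would watch is the bookkeeping of the restriction $w_1=c^1_1,\ldots,w_n=c^1_n$ that threads through (\ref{eqn2.15})--(\ref{eqn2.18}): I must confirm that, once $G=0$, conditions (\ref{eqn2.16}) and (\ref{eqn2.18}) do not impose anything beyond (\ref{eqn3.2}) and (\ref{eqn3.3}). Since (\ref{eqn2.16}) is the integrability of a two-form that is now identically zero, it holds automatically; and (\ref{eqn2.18}) reduces to the integrability of the single one-form appearing in (\ref{eqn3.4}), which is solvable in $A$ precisely because the bracketed coefficient is the right-hand side of a first-order ODE in $w$ with $x$-parameters. I expect no genuine obstacle, only the care needed to verify that the two promised conditions (\ref{eqn3.2}) and (\ref{eqn3.3}) are exactly what the general scheme leaves behind once every antisymmetric quantity collapses; the honest work is confirming that the ``direct verification'' remarks following (\ref{eqn2.9}) and (\ref{eqn2.6}) survive the specialization unchanged.
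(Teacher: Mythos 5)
Your proposal is correct and follows essentially the same route as the paper: Theorem~\ref{th3.1} is obtained there exactly by specializing the general strategy of Section~\ref{sec2} to $m=1$, noting $G^{11}_{ii'}=0$, reducing (\ref{eqn2.4}) to (\ref{eqn3.2}), extracting (\ref{eqn3.3}) from the first identity (\ref{eqn2.7}) via $F^{11}_{i'}=0$, and observing that (\ref{eqn2.15}) degenerates to $d_0\sum A^1_i\,dw=0$ so that only (\ref{eqn3.4}) survives. The few points you flag as needing care (triviality of the second identity (\ref{eqn2.7}), automatic solvability of (\ref{eqn2.18})) are handled the same way in the text preceding the theorem.
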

A~complementary result is as follows.
\begin{theorem}\label{th3.2} If the exact inverse problem $(\ref{eqn3.1})$ admits a~solution $f,$ then it also admits the first--order solution. Alternatively saying, the Helmholz conditions imply $(\ref{eqn3.2})$ and $(\ref{eqn3.3})$. \end{theorem}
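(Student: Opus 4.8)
The plan is to reduce Theorem~\ref{th3.2} to the equivalence already contained in Theorem~\ref{th3.1} by invoking the Helmholz characterization. By Proposition~\ref{prop1.3}, the exact inverse problem (\ref{eqn3.1}) admits a solution $f$ of \emph{arbitrary} order precisely when the given source $e^1=F^1-\sum F^{11}_{ii'}w_{ii'}$ satisfies the Helmholz conditions, i.e. $e[F]=0$ in the extended jet space with $F=e^1w_t$. Since Theorem~\ref{th3.1} asserts that (\ref{eqn3.2}) and (\ref{eqn3.3}) are necessary and sufficient for a \emph{first-order} solution, it suffices to prove the single implication: the Helmholz conditions force (\ref{eqn3.2}) and (\ref{eqn3.3}). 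The reverse implication is then free, and the asserted equivalence of the two solvability notions follows at once.

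First I would compute $e[F]=\sum_I F^1_Iw_{It}$ explicitly. Because $e^1$ contains no variable of order higher than two and $F$ is linear in the variation $w_t$, the only surviving variations are $w_t$, $w_{it}$ and $w_{ii't}$, so the Helmholz conditions (\ref{eqn1.9}) reduce to the finite system $F^1_\emptyset=F^1_{(i)}=F^1_{(ii')}=0$. A direct evaluation of the top coefficient gives $F^1_{(ii')}\equiv 0$ identically; this is the leading-order symmetry, automatically guaranteed by the postulated $F^{11}_{ii'}=F^{11}_{i'i}$, and it carries no information. The core is the middle coefficient, which after collecting terms is the compact self-adjointness relation
\[\frac{\partial e^1}{\partial w_i}=\sum_j\frac{d}{dx_j}\frac{\partial e^1}{\partial w_{ij}}\qquad(i=1,\ldots,n).\]
I would then split this identity according to its degree in the second-order variables: the part linear in those variables equates the quantities $F^{11}_{ab}{}_{w_i}$ with $F^{11}_{ib}{}_{w_a}+F^{11}_{ia}{}_{w_b}$ and, after symmetrization, is exactly (\ref{eqn3.2}), while the part free of second-order variables collects $\partial F^1/\partial w_i$, $\sum_j\partial F^{11}_{ij}/\partial x_j$ and $\sum_j\partial F^{11}_{ij}/\partial w\,w_j$ and reproduces precisely (\ref{eqn3.3}). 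The remaining coefficient $F^1_\emptyset=0$ is obtained by applying $\sum_i d/dx_i$ to the displayed relation, so it is a differential consequence of what is already secured and imposes nothing new (this redundancy is the scalar, $m=1$, instance of the fact that the lowest Helmholz condition follows from the middle one).

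The main obstacle here is bookkeeping rather than conceptual depth: one must keep the combinatorial weights straight when passing between the ordered range $i_1\le\cdots\le i_r$ used for the coordinates (\ref{eqn1.1}) and the symmetric summation implicit in the operators $E^j[f]$, $E^{jj'}_{ii'}[f]$ of Section~\ref{sec2}. The diagonal contributions ($i=i'$) and the off-diagonal ones ($i\neq i'$) enter the second total derivative $d^2/dx_idx_{i'}$ with different multiplicities, and only after the symmetric convention of $E^{jj'}_{ii'}[f]$ is imposed consistently does the linear part collapse onto the fully symmetric condition (\ref{eqn3.2}) and the constant part onto (\ref{eqn3.3}); a careless count manufactures spurious factors of two. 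I would therefore fix the conventions once and for all, verify the computation first for $n=1$ (where (\ref{eqn3.2}) is vacuous and the relation transparently reduces to (\ref{eqn3.3})) as a consistency check, and only then treat general $n$. Once the two families are shown equivalent to (\ref{eqn3.2})--(\ref{eqn3.3}), Theorem~\ref{th3.1} supplies the first-order Lagrangian $f$ and completes the proof.
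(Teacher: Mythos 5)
Your proposal is correct and follows essentially the same route as the paper: both expand the Helmholz identity $e^1[(F^1-\sum F^{11}_{ii'}w_{ii'})w_t]=0$ in the extended jet space, observe that only the coefficient of $w_{it}$ (the self-adjointness relation in the first-order derivatives) carries information, and split that relation by degree in the second-order variables to recover (\ref{eqn3.2}) from the linear part and (\ref{eqn3.3}) from the remainder. Your explicit remarks that the top coefficient is automatic and the zeroth condition is a total-derivative consequence of the middle one are implicit in the paper's displayed formula $-\sum\frac{d}{dx_i}C_{ii''}w_t+2\sum C_{ii''}w_{it}=0$, so nothing essential differs.
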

\begin{proof} Direct analysis of the identity $(\ref{eqn1.7}),$ that is, of the identity
\[e^1\left[(F^1-\sum F^{11}_{ii'}w_{ii'})w_t\right]=0,\]
in the extended jet space yields the condition
\[-\sum\frac{d}{dx_i}C_{ii''}w_t+2\sum C_{ii''}w_{it}=0\qquad (i''=1,\ldots,n)\]
where
\[C_{ii''}=-(F^1)_{w_{i''}}+\sum ( F^{11}_{ii'})_{w_{i''}}w_{ii'}-\sum\frac{d}{dx_{i'}} F^{11}_{ii'}.\]
Then the insertion of
\[\frac{d}{dx_{i'}} F^{11}_{ii'}=( F^{11}_{ii'})_{x_{i'}}+( F^{11}_{ii'})_ww_{i'}+\sum( F^{11}_{ii'})_{w_i}w_{ii'}\]
into the Helmholz conditions $C_{ii''}=0$ provides the desired result. \end{proof}
\begin{remark}\label{rem2} \emph{The explicit calculation of the exact inverse problem $(\ref{eqn3.1})$ is reduced to the equations $(\ref{eqn3.4})$ and $(\ref{eqn3.5})$. Therefore the investigation of global or multivalued solutions under the most general assumptions concerning the definition domains of functions $F^{11}_{ii'}$ and $F^1$ does not cause any difficulties since the well--known classical theory may be comfortably applied.} \end{remark}
\section{The case of one independent variable}\label{sec4}
We suppose $n=1.$ Let us abbreviate
\[x=x_1, w^j_2=w^j_{11}, w^j_3=w^j_{111}, \ldots, f^j=f^j_1=f_{w^j_1}, f^{jj'}=f^{jj'}_{11}=f_{w^j_1w^{j'}_1}, \ldots \]
but otherwise the notation is retained. Then
\[f=f(x,\cdot\cdot,w^j,w^j_1,\cdot\cdot), e[f]=\sum e^j[f]w^j_t, e^j[f]=E^j[f]-\sum E^{jj'}_{11}[f]w^{j'}_2\]
where \[E^{jj'}_{11}[f]=f^{jj'},\ E^j[f]=f_{w^j}-(f^j)_x-\sum (f^j)_{w^{j'}}w^{j'}_1.\]
The exact inverse problem
\begin{equation}\label{eqn4.1} F^{jj'}_{11}=E^{jj'}_{11}[f],\ F^j=E^j[f]\qquad (j,j'=1,\ldots,m) \end{equation}
simplifies since $G^{jj'}_{11}=0.$ The \emph{first requirement} (\ref{eqn4.1}) is solvable if and only if
\begin{equation}\label{eqn4.2}(F^{jj'}_{11})_{w^{j''}_1}=(F^{jj''}_{11})_{w^{j'}_1}\qquad (j,j',j''=1,\ldots,m)\end{equation}
where the symmetry $F^{jj'}_{11}=F^{j'j}_{11}$ is supposed. Passing to the \emph{second requirement} (\ref{eqn4.1}), we recall the functions \[F^{j'j}_1=(F^j)_{w^{j'}_1}+(F^{jj'}_{11})_{x}+\sum(F^{jj'}_{11})_{w^{j''}}w^{j''}_1\]
and identities (\ref{eqn2.7}) which read
\begin{equation}\label{eqn4.3}F^{j'j}_1+F^{jj'}_1=0, (F^{j'j}_1)_{w^{j''}_1}=(F^{j'j''}_{11})_{w^j}-(F^{jj''}_{11})_{w^{j'}}\quad (j,j',j''=1,\ldots,m).
\end{equation}
There are additional solvability conditions
\begin{equation}\label{eqn4.4}d_0\sum F^{j'j}_1dw^j\wedge dw^{j'}=0,\end{equation}
\begin{equation}\label{eqn4.5}2d_0\sum F^jdw^j+\left(\frac{\partial}{\partial x}+\sum c^{j''}_1\frac{\partial}{\partial w^{j''}_1}\right)\sum F^{j'j}_1dw^j\wedge dw^{j'}=0\end{equation}
at the level set $w^1_1=c^1_1,\ldots,w^m_1=c^m_1$ where $c^j_1=c^j_1(x)$ are arbitrary but fixed functions. We conclude:
\begin{theorem}\label{th4.1}
The exact inverse problem $(\ref{eqn4.1})$ admits the first--order solution $f$ if and only if the identities $(\ref{eqn4.2})$--$(\ref{eqn4.5})$ are satisfied. Then the sought Lagrange function is
\[f=\bar f+\sum A^j_1(w^j_1-c^j_1)+A\] where $\bar f=\bar f(x,\cdot\cdot,w^j,w^j_1,\cdot\cdot)$ is the unique solution of the initial problem
\[\frac{\partial^2\bar f}{\partial w^j_1\partial w^{j'}_1}=F^{jj'}_{11}\ (j,j'=1,\ldots,m), \bar f=\frac{\partial\bar f}{\partial w^j_1}=0\quad \text{if}\quad w^1_1=c^1_1,\ldots,w^m_1=c^m_1\]
and coefficients $A^j_1,A$ satisfy
\[d_0\sum A^{j'}_1dw^{j'}=\sum F^{j'j}_1dw^j\wedge dw^{j'},\]
\[d_0A=\sum\left(F^j+(\frac{\partial}{\partial x}+\sum c^{j'}_1\frac{\partial}{\partial w^{j'}})A^j_1\right)dw^j\] at the level set $w^1_1=c^1_1,\ldots,w^m_1=c^m_1.$ Functions $c^1_1, \ldots,c^m_1$ of variable $x$ are arbitrary (but fixed). \end{theorem}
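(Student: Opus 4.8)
The plan is to specialise the general strategy of Section~\ref{sec2} to the present case $n=1$. The decisive simplification is that, with a single independent variable, the antisymmetry postulated in \eqref{eqn2.3} forces $G^{jj'}_{11}=-G^{jj'}_{11}=0$, so the auxiliary unknowns disappear and \eqref{eqn2.2} collapses to the direct prescription $f_{w^j_1w^{j'}_1}=F^{jj'}_{11}$. First I would treat the \emph{first requirement} \eqref{eqn4.1}: it is the system \eqref{eqn2.12} with $G=0$, whose solvability is governed by the integrability condition \eqref{eqn2.4}, here reducing precisely to \eqref{eqn4.2}. Under \eqref{eqn4.2} the unique particular solution $\bar f$ normalised by the initial conditions \eqref{eqn2.13} is produced by two successive integrations in the first-order variables along the segment issuing from $c^j_1$, and the general solution carries the linear correction \eqref{eqn2.14}, namely $f=\bar f+\sum A^j_1(w^j_1-c^j_1)+A$ with $A^j_1,A$ independent of the first-order variables.

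Next I would impose the \emph{second requirement} $F^j=E^j[f]$. Differentiating $F^j$ with respect to $w^{j'}_1$ and reorganising as in \eqref{eqn2.5}--\eqref{eqn2.7} yields the identities \eqref{eqn4.3}: the first is the antisymmetry $F^{j'j}_1=-F^{jj'}_1$, the second a compatibility relation among the prescribed data alone. Following \eqref{eqn2.9}--\eqref{eqn2.10}, the only remaining freedom is the choice of the coefficients $A^j_1$ and $A$, and the second requirement holds exactly when the remainders $B^{j'j}_1$ and $B^j$ occurring there can be annihilated. The first of these, by \eqref{eqn2.15}, amounts to solving $d_0\sum A^{j'}_1dw^{j'}=\sum F^{j'j}_1dw^j\wedge dw^{j'}$ on the level set $w^1_1=c^1_1,\ldots,w^m_1=c^m_1$; by the Poincar\'e Lemma this is solvable for $A^j_1$ if and only if the right-hand member is $d_0$-closed, which is condition \eqref{eqn4.4}.

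With the $A^j_1$ in hand, the vanishing of $B^j$ reduces, via \eqref{eqn2.17}, to solving the equation $d_0A=\sum(F^j+\cdots)dw^j$ displayed in the statement, again on the same level set; once more by the Poincar\'e Lemma this is solvable for $A$ if and only if that $1$-form is closed, and after eliminating $A^j_1$ through \eqref{eqn2.15} the closedness is exactly \eqref{eqn4.5}, the specialisation of \eqref{eqn2.19}. Assembling the steps, the problem \eqref{eqn4.1} is solvable precisely when \eqref{eqn4.2} (first requirement), the necessary compatibility \eqref{eqn4.3}, and the two closedness conditions \eqref{eqn4.4} and \eqref{eqn4.5} all hold, and the sought Lagrange function is then given by the stated formula, which closes both the sufficiency and the explicit-construction parts of the claim.

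The delicate point --- flagged in Remark~\ref{rem1} --- is the two-fold integration. The second requirement is a single first-order condition, yet it only becomes algebraically transparent after two differentiations, where it turns into the identities \eqref{eqn4.3}; restoring it then demands two successive integrations, and the hard part will be to check that the ``constants of integration'' produced at each stage are precisely the admissible linear-correction coefficients $A^j_1$ and $A$, with no leftover obstruction. Concretely, one must verify that $B^{j'j}_1$ and $B^j$ can be made to vanish simultaneously; it is exactly this verification that upgrades \eqref{eqn4.3} from a merely necessary condition to genuine solvability once \eqref{eqn4.4} and \eqref{eqn4.5} are adjoined, with the Poincar\'e Lemma applied throughout on the frozen level set $w^j_1=c^j_1$.
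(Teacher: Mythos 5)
Your proposal is correct and follows essentially the same route as the paper, which proves Theorem~\ref{th4.1} precisely by specialising the general strategy of Section~\ref{sec2} to $n=1$: the skew--symmetry in the lower indices kills $G^{jj'}_{11}$, \eqref{eqn2.4} becomes \eqref{eqn4.2}, the identities \eqref{eqn2.7} become \eqref{eqn4.3}, and \eqref{eqn2.16}, \eqref{eqn2.19} become \eqref{eqn4.4}, \eqref{eqn4.5}, with the Lagrange function assembled via \eqref{eqn2.12}--\eqref{eqn2.17}. Your closing remarks on the two-fold integration and the role of the integration constants $A^j_1$, $A$ match Remark~\ref{rem1} exactly.
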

The complementary result is strong.
\begin{theorem}\label{th4.2}
If an~exact inverse problem with the second--order data
\[\label{eqn4.10}e^j=e^j[f]\qquad (j=1,\ldots,m;\,e^j=e^j(x,\cdot\cdot,w^{j'},w^{j'}_1,w^{j'}_2,\cdot\cdot)) \]
admits a~solution (of any order), then the given functions $e^j$ are in fact linear in the second--order variables and the exact inverse problem admits even the first--order solution. $($So we deal with the problem $(\ref{eqn4.1}).$\/$)$ \end{theorem}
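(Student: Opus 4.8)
The plan is to argue by downward induction on the order $r$ of a solution, pushing it down to $1$ while leaving the Euler--Lagrange expressions unchanged. The mechanism that makes this work is that a total derivative $\frac{d}{dx}h$ is a divergence, so by Proposition~\ref{prop1.2} it satisfies $e^j[\tfrac{d}{dx}h]=0$; since $e^j$ is linear in its Lagrange argument (formula $(\ref{eqn1.3})$), subtracting such a total derivative does not alter $e^j$. So I would suppose $f=f(x,\cdot\cdot,w^{j'}_s,\cdot\cdot)$ has order $r\ge 2$ and that, specializing $(\ref{eqn1.3})$ to $n=1$,
\[e^j[f]=\sum_{s\ge 0}(-1)^s\frac{d^s}{dx^s}f_{w^j_s}\]
has order at most $2$, and then extract the top two coefficients of this expression viewed as a polynomial in the highest jet variables. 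Since $\frac{d^s}{dx^s}$ raises the order of any function by exactly $s$ in its leading term, only $s=r$ contributes to $w^{j'}_{2r}$, with coefficient $(-1)^rf_{w^j_rw^{j'}_r}$. As $2r>2$ this must vanish, forcing $f_{w^j_rw^{j'}_r}=0$, i.e. $f$ is \emph{affine in the top variable},
\[f=\sum_j P^j\,w^j_r+Q,\qquad P^j=f_{w^j_r},\quad Q=f-\sum_j P^j w^j_r,\]
with $P^j,Q$ of order $\le r-1$.

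Next I would read off the coefficient of $w^{j'}_{2r-1}$. Here $f_{w^j_r}=P^j$ has order $r-1$ while $f_{w^j_{r-1}}$ has order $r$ and is itself affine in $w_r$, and one checks that only $s=r$ and $s=r-1$ reach order $2r-1$; all lower $s$ stay at order $\le 2r-2$. Tracking the two leading terms gives the coefficient $(-1)^rP^j_{w^{j'}_{r-1}}+(-1)^{r-1}P^{j'}_{w^j_{r-1}}$, and since $2r-1>2$ its vanishing yields the symmetry
\[P^j_{w^{j'}_{r-1}}=P^{j'}_{w^j_{r-1}}\qquad(j,j'=1,\ldots,m).\]
This is exactly the closedness condition that, by the Poincar\'e Lemma applied in the variables $w^1_{r-1},\ldots,w^m_{r-1}$ (treating all remaining coordinates as parameters), furnishes a function $h$ of order $r-1$ with $h_{w^j_{r-1}}=P^j$.

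With $h$ in hand the reduction is immediate: the $w_r$-part of $\frac{d}{dx}h$ is $\sum_j h_{w^j_{r-1}}w^j_r=\sum_j P^j w^j_r$, the same top part as $f$, so $g:=f-\frac{d}{dx}h$ has order $\le r-1$, and by the remark above $e^j[g]=e^j[f]$. Iterating, I reach a solution of order $\le 1$; then $e^j[g]=E^j[g]-\sum E^{jj'}_{11}[g]w^{j'}_2$ is affine (``linear'') in the second-order variables, and because $e^j[g]=e^j$ throughout, this simultaneously proves that the data $e^j$ are linear in $w^{j'}_2$ and exhibits the announced first-order solution $g$, to which Theorem~\ref{th4.1} then applies. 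The only real work—and thus the main obstacle—is the leading-symbol bookkeeping in the first two paragraphs, namely the precise action of $\frac{d^s}{dx^s}$ on the top jet variables that produces the two coefficients above; once the symmetry relation is established, the integration by parts and the appeal to Proposition~\ref{prop1.2} are routine.
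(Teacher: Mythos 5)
Your argument is correct, and it takes a genuinely different --- and more self-contained --- route than the paper. The text printed under Theorem~\ref{th4.2} is explicitly only ``a note to proof'': it reads off the linearity in $w^j_2$ from the Helmholz identity and then delegates everything else to the reduction principle of \cite{T2}, which the Appendix re-proves by means of the Poincar\'e--Cartan form $\varphi$ (using $d^2\varphi=0$ to kill the coefficients $a^{kj}_{rs}$ and then applying the Poincar\'e Lemma to $\varphi$ itself). You instead run the reduction directly on the differential polynomial $e^j[f]$: vanishing of the $w^{j'}_{2r}$--coefficient forces affineness in the top variables, vanishing of the $w^{j'}_{2r-1}$--coefficient gives the symmetry $P^j_{w^{j'}_{r-1}}=P^{j'}_{w^j_{r-1}}$, the Poincar\'e Lemma in the variables $w^1_{r-1},\ldots,w^m_{r-1}$ produces a potential $h$, and subtracting the null Lagrangian $\frac{d}{dx}h$ (Proposition~\ref{prop1.2} together with linearity of $e^j[\,\cdot\,]$) drops the order by one; iteration lands on an order--$\le 1$ solution and hence on the linearity of the data. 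This is essentially the ``direct approach'' the paper itself sketches in the last paragraph of the Appendix, but your version is the more careful one: that remark records the coefficient of $w^{j'}_{2K-1}$ as $(-1)^K f^j_{w^{j'}_{K-1}}$ and only handles the degenerate case $\partial f^j/\partial w^{j'}_{K-1}=0$, whereas the true coefficient is the antisymmetrization $(-1)^K\bigl(f^j_{w^{j'}_{K-1}}-f^{j'}_{w^j_{K-1}}\bigr)$, so the genuine degenerate case is the symmetric one --- exactly what your Poincar\'e--Lemma step resolves. What the form-theoretic proof buys is the full even/odd statement of the Appendix Proposition for arbitrary order $S$ in one sweep; what yours buys is an entirely elementary, explicitly recursive construction of the first--order Lagrangian, in the spirit the authors themselves advocate.
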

\begin{proof}[A note to proof] Identity (\ref{eqn1.7}) in the extended jet space reads
\[\sum(e^{j'})_{w^j}w^{j'}_t-\frac{d}{dx}\left(\sum(e^{j'})_{w^j_1}w^{j'}_t\right)-\frac{d}{dt}e^j+\frac{d^2}{dx^2}\left(\sum(e^{j'})_{w^j_2}w^j_t\right)=0.\]
The linearity in $w^j_2$ immediately follows, however, our solvability conditions cannot be easily derived from the Helmholz conditions by a~mere formal calculus.
The complete proof rests on the \emph{reduction principle} \cite[Theorem 4.5.5]{T2}: \emph{If $n=1$ then every Euler--Lagrange system of even order $2K$ corresponds to appropriate Lagrange function of order $K.$} In our case $K=1.$ See the Appendix below. \end{proof}
\begin{remark}\label{rem3} \emph{The case $n=1$ is extensively discussed in \cite{T1, T2} with the systematical use of the Poincar\'e--Cartan forms and the final formulae are expressed by the Tonti integrals. The results  \cite{T1, T2} are of a~local nature: the multiple and global solutions cannot be included, the singularities are not allowed.
We also refer to a~familiar global theory \cite{T3} employing homology of total variational bicomplex which excludes the presence of the higher--order singularities in the fibers of the jet space.} \end{remark}
\section{The case of two dependent variables}\label{sec5}
We suppose $m=2$ with the range of indices $i,i',i''=1,\ldots,n$ and $j,j',j''=1,2.$ The original notation is preserved. The given functions
\begin{equation}\label{eqn5.1} F^{jj'}_{ii'}=F^{jj'}_{i'i}=F^{j'j}_{i'i}=F^{j'j}_{ii'},\ F^j \end{equation}
are of the symmetrical nature while the auxiliary functions
\begin{equation}\label{eqn5.2} G^{jj}_{ii'}=G^{jj'}_{ii}=0,\ G^{12}_{ii'}=-G^{12}_{i'i}=G^{21}_{i'i}=-G^{21}_{ii'} \end{equation}
are of the skew--symmetric kind and need not identically vanish.  This fact makes the exact inverse problem nontrivial.

\medskip

Let us recall the main achievements of Section~\ref{sec2}.

\medskip

The \emph{first requirement} (\ref{eqn2.1}) was clarified by equations (\ref{eqn2.4}) which read
\begin{equation}\label{eqn5.3} 
\left(F^{jj}_{ii'}\right)_{w^j_{i''}}=\left(F^{jj}_{ii''}\right)_{w^j_{i'}}, \ \left(F^{12}_{ii}\right)_{w^1_{i'}}=\left(F^{11}_{ii'}\right)_{w^2_i}, \ \left(F^{21}_{ii}\right)_{w^2_{i'}}=\left(F^{22}_{ii'}\right)_{w^1_i},
\end{equation}
\begin{equation}\label{eqn5.4} 
\left(F^{12}_{ii'}+G^{12}_{ii'}\right)_{w^1_{i''}}=\left(F^{11}_{ii''}\right)_{w^2_{i'}},\ \left(F^{21}_{ii'}+G^{21}_{ii'}\right)_{w^2_{i''}}=\left(F^{22}_{ii''}\right)_{w^1_{i'}}
\end{equation} in our case $m=2.$ Identities (\ref{eqn5.3}) concern only the given functions (\ref{eqn5.1}) while identities (\ref{eqn5.4}) may be regarded as differential equations for the functions (\ref{eqn5.2})

The \emph{second requirement} (\ref{eqn2.1}) was represented by identities (\ref{eqn2.7}) completed with the solvability conditions (\ref{eqn2.16}) and (\ref{eqn2.19}). The first identity (\ref{eqn2.7}) reads
\begin{equation}\label{eqn5.5} 
\left(F^j\right)_{w^{j'}_{i'}}+\left(F^{j'}\right)_{w^j_{i'}}+2\sum\left(F^{jj'}_{ii'}\right)_{x_i}+2\sum\left(F^{jj'}_{ii'}\right)_{w^{j''}}w^{j''}_i=0,
\end{equation} by using definition (\ref{eqn2.6}) and the skew--symmetry (\ref{eqn5.2}). It concerns only the functions (\ref{eqn5.1}).
The second identity (\ref{eqn2.7}) is trivial if $j=j'.$ Assuming $j\not= j',$ we obtain equations
\begin{equation}\label{eqn5.6} \begin{array}{l}
\left(F^{12}_{i'}\right)_{w^1_i}=\left(F^{11}_{i'i}\right)_{w^2}-\left(F^{21}_{i'i}+G^{21}_{i'i}\right)_{w^1},\\ \left(F^{12}_{i'}\right)_{w^2_i}=\left(F^{12}_{i'i}+G^{12}_{i'i}\right)_{w^2}+\left(F^{22}_{i'i}\right)_{w^1}\end{array}
\end{equation}
which simplify if $i=i'.$
Condition (\ref{eqn2.16}) is trivial and condition (\ref{eqn2.19}) reads
\begin{equation}\label{eqn5.7}\left(F^1\right)_{w^2}-\left(F^2\right)_{w^1}+\sum\left(\frac{\partial}{\partial{x_{i}}}+\sum c^j_{i}\frac{\partial}{\partial w^j}\right)F^{12}_{i}=0\end{equation} if $w^1_1=c^1_1,\ldots, w^2_n=c^2_n.$ Functions $F^{12}_i$ appearing here are defined in (\ref{eqn2.6}).

\bigskip

In more detail. On this occasion, the lower indices $i,i',i''$ are completed with additional $k,k'=1,2$ for aesthetic reasons.
\begin{lemma}\label{lemma5.1} The skew--symmetry $(\ref{eqn5.2})$ is ensured if and only if identities
\begin{equation}\label{eqn5.8}
\left(F^{11}_{ik}\right)_{w^2_{i'}}+\left(F^{11}_{i'k}\right)_{w^2_i}=2\left(F^{12}_{ii'}\right)_{w^1_k},\ \left(F^{22}_{ik}\right)_{w^1_{i'}}+\left(F^{22}_{i'k}\right)_{w^1_i}=2\left(F^{12}_{ii'}\right)_{w^2_k}
\end{equation}
are satisfied. \end{lemma}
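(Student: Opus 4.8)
The plan is to reduce the full skew--symmetry $(\ref{eqn5.2})$ to a single differential condition on $G^{12}_{ii'}$ and to recognise $(\ref{eqn5.8})$ as exactly that condition. In the construction of Section~\ref{sec2} one sets $G^{11}_{ii'}=G^{22}_{ii'}=0$ and declares $G^{21}_{ii'}:=-G^{12}_{ii'}$, so the only part of $(\ref{eqn5.2})$ not built in by fiat is the skew--symmetry $G^{12}_{ii'}=-G^{12}_{i'i}$ in the lower indices. This is a genuine constraint, since $(\ref{eqn5.4})$ fixes only the derivatives of $G^{12}_{ii'}$ with respect to the first--order variables, not its values. Writing $k$ for the third lower index, $(\ref{eqn5.4})$ together with $G^{21}=-G^{12}$ yields
\[(G^{12}_{ii'})_{w^1_k}=(F^{11}_{ik})_{w^2_{i'}}-(F^{12}_{ii'})_{w^1_k},\quad (G^{12}_{ii'})_{w^2_k}=(F^{21}_{ii'})_{w^2_k}-(F^{22}_{ik})_{w^1_{i'}}.\]

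First I would form the symmetric part $S_{ii'}:=G^{12}_{ii'}+G^{12}_{i'i}$ and differentiate it using the two formulas above. The symmetries $(\ref{eqn5.1})$ --- in particular $F^{12}_{ii'}=F^{12}_{i'i}$ and $F^{21}_{ii'}=F^{12}_{ii'}$ --- make the $F^{12}$--terms coalesce and give
\[(S_{ii'})_{w^1_k}=(F^{11}_{ik})_{w^2_{i'}}+(F^{11}_{i'k})_{w^2_i}-2(F^{12}_{ii'})_{w^1_k},\]
\[(S_{ii'})_{w^2_k}=2(F^{12}_{ii'})_{w^2_k}-(F^{22}_{ik})_{w^1_{i'}}-(F^{22}_{i'k})_{w^1_i}.\]
Thus every derivative of $S_{ii'}$ with respect to the first--order variables vanishes precisely when both identities $(\ref{eqn5.8})$ hold, and this one computation drives the whole equivalence.

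For the necessity, if $G^{12}_{ii'}=-G^{12}_{i'i}$ then $S_{ii'}\equiv 0$, so the derivatives above vanish and $(\ref{eqn5.8})$ is immediate. For the sufficiency, assume $(\ref{eqn5.8})$; then $S_{ii'}$ has vanishing derivatives in all first--order variables, hence $S_{ii'}=S_{ii'}(\cdots,x_i,w^{j''},\cdots)$ depends on the lower--order variables only. Since $(\ref{eqn5.4})$ determines $G^{12}_{ii'}$ merely up to an additive function of the $x$'s and $w$'s, I would replace $G^{12}_{ii'}$ by $G^{12}_{ii'}-\tfrac12 S_{ii'}$; this preserves $(\ref{eqn5.4})$, and because $S_{ii'}=S_{i'i}$ the corrected $G^{12}$ is skew in its lower indices, so $(\ref{eqn5.2})$ is ensured.

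The only delicate point I anticipate is the index bookkeeping: one must invoke $(\ref{eqn5.1})$ to identify $F^{21}$ with $F^{12}$ and to use their symmetry in $i,i'$, for it is precisely these identifications that collapse the four raw derivative terms into the symmetric pairs of $(\ref{eqn5.8})$. The conceptual step in the sufficiency direction --- that the integration constants, being functions of the $x$'s and $w$'s alone, absorb $S_{ii'}$ without disturbing $(\ref{eqn5.4})$ --- is routine once $S_{ii'}$ is known to be free of the first--order variables.
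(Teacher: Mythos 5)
Your proof is correct and follows essentially the same route as the paper: both reduce the skew--symmetry to the requirement that the system $(\ref{eqn5.9})$ be invariant under the exchange $i\leftrightarrow i'$, which is exactly the vanishing of the derivatives of $S_{ii'}=G^{12}_{ii'}+G^{12}_{i'i}$ and hence the identities $(\ref{eqn5.8})$. Your explicit correction $G^{12}_{ii'}\mapsto G^{12}_{ii'}-\tfrac12 S_{ii'}$ in the sufficiency direction merely spells out what the paper leaves to the reader (and to the skew--symmetric integration constants $C_{ii'}$ of $(\ref{eqn5.11})$).
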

\begin{proof} We recall that (\ref{eqn5.4}) is regarded as the system of differential equations
\begin{equation}\label{eqn5.9} \frac{\partial G^{12}_{ii'}}{\partial w^1_k}=\left(F^{11}_{ik}\right)_{w^2_{i'}}-\left(F^{12}_{ii'}\right)_{w^1_{k}},\ \frac{\partial G^{21}_{ii'}}{\partial w^2_k}=\left(F^{22}_{ik}\right)_{w^1_{i'}}-\left(F^{12}_{ii'}\right)_{w^2_{k}}.
\end{equation}
The skew--symmetry (\ref{eqn5.2}) holds true if and only if the system
\[\frac{\partial G^{12}_{ii'}}{\partial w^1_k}=-\left(F^{11}_{i'k}\right)_{w^2_i}+\left(F^{12}_{i'i}\right)_{w^1_k},\ 
\frac{\partial G^{21}_{ii'}}{\partial w^2_k}=-\left(F^{22}_{i'k}\right)_{w^1_i}-\left(F^{21}_{i'i}\right)_{w^2_k}\]
appearing by the exchange $i\leftrightarrow i'$ is identical with (\ref{eqn5.9}). This observation immediately implies the assertion of Lemma~\ref{lemma5.1}. \end{proof}
\begin{lemma}\label{lemma5.2}
Assuming the skew--symmetry, system $(\ref{eqn5.9})$ is compatible if and only if all identities
\begin{equation}\label{eqn5.10}\begin{array}{l}
\left(F^{11}_{ik}\right)_{w^2_{i'}w^2_{k'}}=\left(F^{22}_{i'k'}\right)_{w^1_iw^2_k},\\ \left(F^{11}_{ik}\right)_{w^1_{i'}w^2_{k'}}=\left(F^{11}_{ii'}\right)_{w^2_kw^1_{k'}},\\ \left(F^{22}_{ik}\right)_{w^2_{i'}w^1_{k'}}=\left(F^{22}_{ii'}\right)_{w^1_kw^2_{k'}}
\end{array}\end{equation} are satisfied. \end{lemma}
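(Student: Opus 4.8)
The plan is to read the pair of equations (\ref{eqn5.9}), together with the assumed skew--symmetry (\ref{eqn5.2}), as a prescription of the \emph{complete} first--order gradient of each unknown $G^{12}_{ii'}$. The first equation of (\ref{eqn5.9}) fixes the derivatives with respect to the variables $w^1_k$, while the second equation, combined with the relation $G^{21}_{ii'}=-G^{12}_{ii'}$ contained in (\ref{eqn5.2}), fixes the derivatives with respect to the variables $w^2_k$:
\[\frac{\partial G^{12}_{ii'}}{\partial w^1_k}=\left(F^{11}_{ik}\right)_{w^2_{i'}}-\left(F^{12}_{ii'}\right)_{w^1_k},\quad \frac{\partial G^{12}_{ii'}}{\partial w^2_k}=\left(F^{12}_{ii'}\right)_{w^2_k}-\left(F^{22}_{ik}\right)_{w^1_{i'}}.\]
Thus for each fixed pair $i,i'$ the function $G^{12}_{ii'}$ is to be reconstructed from a prescribed differential in the first--order variables $w^1_k,w^2_k$, the remaining variables $x_1,\dots,x_n,w^1,w^2$ entering merely as parameters.

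First I would invoke the standard integrability (Poincar\'e) criterion: an overdetermined system $\partial u/\partial v_\alpha=R_\alpha$ admits a solution $u$ if and only if $\partial R_\alpha/\partial v_\beta=\partial R_\beta/\partial v_\alpha$ for every pair of independent variables $v_\alpha,v_\beta$. Here the $v_\alpha$ run through $w^1_1,\dots,w^1_n,w^2_1,\dots,w^2_n$, so the criterion splits into three families according to the types of the two differentiations: both of type $w^1$, both of type $w^2$, and the mixed $w^1$--$w^2$ case. Because the criterion is an equivalence, establishing that these three families coincide with (\ref{eqn5.10}) will settle both directions of the lemma at once.

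The decisive step is then to evaluate the three families and to recognise (\ref{eqn5.10}). In the two ``pure'' families the common contribution coming from $F^{12}_{ii'}$ is symmetric in the two differentiation indices and therefore drops out; what survives are precisely the symmetry conditions on the mixed second derivatives of $F^{11}$ and of $F^{22}$, that is, the second and third lines of (\ref{eqn5.10}) after a routine rearrangement of the commuting partials. In the mixed family the two $F^{12}$--contributions instead \emph{add}, leaving
\[\left(F^{11}_{ik}\right)_{w^2_{i'}w^2_{k'}}+\left(F^{22}_{ik'}\right)_{w^1_{i'}w^1_k}=2\left(F^{12}_{ii'}\right)_{w^1_kw^2_{k'}}.\]
To eliminate the still--present $F^{12}$ I would differentiate the second identity of (\ref{eqn5.8}) with respect to $w^1_k$, substitute the result for $2\left(F^{12}_{ii'}\right)_{w^1_kw^2_{k'}}$, and cancel the matching $F^{22}$--term; this produces the first line of (\ref{eqn5.10}). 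Crucially, the identities (\ref{eqn5.8}) are at our disposal exactly because the skew--symmetry is assumed, so that Lemma~\ref{lemma5.1} applies.

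The main obstacle I anticipate is the bookkeeping in the mixed family: one must use the hypotheses (\ref{eqn5.8}) to remove the coupling function $F^{12}$ and then check, by means of the symmetries (\ref{eqn5.1}) and (\ref{eqn5.2}), that the reduction yields the stated form rather than some relabelled variant. A secondary point to keep straight is that $\partial G^{12}_{ii'}/\partial w^2_k$ is \emph{not} furnished directly by (\ref{eqn5.9}) but only through the skew--symmetry; this is precisely where the hypothesis of the lemma is consumed, and it is what licenses both the derivation of the mixed condition and the use of (\ref{eqn5.8}) in simplifying it.
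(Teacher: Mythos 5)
Your proposal is correct and follows essentially the same route as the paper: rewrite (\ref{eqn5.9}) via the skew--symmetry as a prescription of the full gradient of $G^{12}_{ii'}$ in the first--order variables and equate mixed second partials, the three families of cross--derivative conditions yielding (\ref{eqn5.10}). You in fact carry out the mixed $w^1$--$w^2$ family (where (\ref{eqn5.8}) is needed to eliminate $F^{12}_{ii'}$) more explicitly than the paper, which only displays the pure $w^1$--$w^1$ case and calls the rest routine.
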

\begin{proof}
Routine application of the rule
\[\left(G^{12}_{ii'}\right)_{w^1_kw^2_{k'}}=\left(G^{12}_{ii'}\right)_{w^2_{k'}w^1_k},\ \left(G^{12}_{ii'}\right)_{w^j_kw^j_{k'}}=\left(G^{12}_{ii'}\right)_{w^j_{k'}w^j_k}\]
to the equation (\ref{eqn5.9}) combined with the skew--symmetry. For instance, we have
\[(G^{12}_{ii'})_{w^1_kw^1_{k'}}=((F^{11}_{ik})_{w^2_{i'}}-(F^{12}_{ii'})_{w^1_k})_{w^1_{k'}}\, , 
(G^{12}_{ii'})_{w^1_{k'}w^1_k}=((F^{11}_{ik'})_{w^2_{i'}}-(F^{12}_{ii'})_{w^1_{k'}})_{w^1_k}\]
and therefore
\[\left(F^{11}_{ik}\right)_{w^2_{i'}w^1_{k'}}=\left(F^{11}_{ik'}\right)_{w^2_{i'}w^1_k}\]
which provides the second equation (\ref{eqn5.10}). \end{proof}
\begin{lemma}\label{lemma5.3} Equations $(\ref{eqn5.6})$ can be expressed only in terms of functions $(\ref{eqn5.1}).$ \end{lemma}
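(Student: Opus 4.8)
The plan is to show that, although the functions $F^{12}_{i'}$ entering $(\ref{eqn5.6})$ carry the unknown skew--symmetric coefficients through their definition $(\ref{eqn2.6})$, every occurrence of these coefficients either cancels outright or can be replaced by a first--order derivative already controlled by the system $(\ref{eqn5.9})$. First I would insert the explicit expression $(\ref{eqn2.6})$ for $F^{12}_{i'}$ (the case $j'=1$, $j=2$, so that $f^{21}_{ii'}=F^{21}_{ii'}+G^{21}_{ii'}$) into both lines of $(\ref{eqn5.6})$ and differentiate by $w^1_i$ and $w^2_i$ respectively, applying the product rule to the summation $\sum(f^{21}_{ii'})_{w^{j''}}w^{j''}_i$ of $(\ref{eqn2.6})$. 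The differentiation produces two kinds of terms: those in which the derivative falls on a coefficient $f^{21}$ (these are second derivatives of $F^{21}+G^{21}$ with respect to a \emph{first--order} variable, possibly composed with $\partial/\partial x$ or $\partial/\partial w^{j''}$), and a single \emph{contact} term arising when the derivative meets the explicit factor $w^{j''}$, which yields $(f^{21}_{ii'})_{w^1}=(F^{21}_{ii'})_{w^1}+(G^{21}_{ii'})_{w^1}$ in the first line and $(f^{21}_{ii'})_{w^2}=(F^{21}_{ii'})_{w^2}+(G^{21}_{ii'})_{w^2}$ in the second.

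The key observation is that these contact terms, which are the only \emph{zeroth}--order derivatives of $G^{21}$ that appear, cancel exactly against the explicit unknown coefficients standing on the right--hand sides of $(\ref{eqn5.6})$. Using the skew--symmetry $(\ref{eqn5.2})$ in the forms $G^{21}_{i'i}=-G^{21}_{ii'}$ and $G^{12}_{i'i}=G^{21}_{ii'}$, the term $-(G^{21}_{i'i})_{w^1}$ on the right of the first line equals $+(G^{21}_{ii'})_{w^1}$, matching the contact term generated on the left; the identical matching $(G^{12}_{i'i})_{w^2}=(G^{21}_{ii'})_{w^2}$ disposes of the second line. Hence after these cancellations no zeroth--order $w$--derivative of the unknowns survives.

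It then remains to treat the derivatives of $G^{21}$ that are left, all of which are taken with respect to the first--order variables $w^{j''}_{i''}$ (and possibly further differentiated by $x_{i''}$ or $w^{j''}$). Here I would invoke $(\ref{eqn5.9})$: reading it together with the skew--symmetry $G^{21}_{ii'}=-G^{12}_{ii'}$, one recovers \emph{both} $\partial G^{12}_{ii'}/\partial w^1_k$ and $\partial G^{12}_{ii'}/\partial w^2_k$ — that is, every first--order--variable derivative of the unknown coefficients — expressed solely through the given functions $(\ref{eqn5.1})$. Substituting these expressions, and differentiating them further by $x_{i''}$ or $w^{j''}$ where required (which keeps them among the data $(\ref{eqn5.1})$), eliminates the last trace of $G^{21}$ from $(\ref{eqn5.6})$ and leaves identities in the given functions alone.

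The main obstacle I anticipate is purely the index and sign bookkeeping: one must track which slot is differentiated at each step and repeatedly apply the four--fold symmetry $(\ref{eqn5.2})$ both to align the contact terms with the right--hand coefficients and to convert the ``wrong--slot'' derivative $\partial G^{21}/\partial w^1_k$, which $(\ref{eqn5.9})$ does not supply directly, into the ``right--slot'' derivative $\partial G^{12}/\partial w^1_k$, which it does. No genuinely new idea beyond $(\ref{eqn5.9})$ and $(\ref{eqn5.2})$ is needed; the content of the lemma is precisely that these two facts together suffice to remove the unknowns from $(\ref{eqn5.6})$.
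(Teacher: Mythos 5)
Your argument is correct and follows essentially the same route as the paper's own proof: insert the definition $(\ref{eqn2.6})$ of $F^{12}_{i'}$ into $(\ref{eqn5.6})$, replace every derivative of $G^{21}$ with respect to a first--order variable via $(\ref{eqn5.9})$ (plus the skew--symmetry $(\ref{eqn5.2})$), and observe that the lone zeroth--order contact term $(G^{21}_{ii'})_{w^{j''}}$ produced by the product rule cancels against the $G$--term on the right--hand side precisely because $G^{21}_{ii'}=-G^{21}_{i'i}$. No substantive difference from the paper's proof.
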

\begin{proof} Recalling the particular case
\[F^{12}_{i'}=(F^2)_{w^1_{i'}}+\sum (F^{21}_{ki'}+G^{21}_{ki'})_{x_k}+\sum (F^{21}_{ki'}+G^{21}_{ki'})_{w^{j''}}w^{j''}_k\]
of definition (\ref{eqn2.6}), let us deal with the first equation (\ref{eqn5.6}). We are interested only in the occurences of functions (\ref{eqn5.2}). So we have the equation
\[((F^{12}_{i'})_{w^1_i}=)\cdots+\sum(G^{21}_{ki'})_{x_kw^1_i}+\sum((G^{21}_{ki'})_{w^{j''}}w^{j''}_k)_{w^1_i}=\cdots -(G^{21}_{i'i})_{w^1}\,.\]
However
\[(G^{21}_{ki'})_{x_kw^1_i}=(G^{21}_{ki'})_{w^1_ix_k}=-(F^{11}_{ki})_{w^2_{i'}}+(F^{12}_{ii'})_{w^1_k},\]
\[
((G^{21}_{ki'})_{w^{j''}}w^{j''}_k)_{w^1_i}=(-(F^{11}_{ki})_{w^2_{i'}}+(F^{12}_{ii'})_{w^1_k})_{w^{j''}}+(G^{21}_{ii'})_{w^1}\]
with the use of the first equation (\ref{eqn5.9}). We are done since $G^{21}_{ii'}=-G^{21}_{i'i}.$ The second equation (\ref{eqn5.6})  is analogous. \end{proof}

The remaining solvability conditions (\ref{eqn5.7}) can be satisfied by the appropriate choice of functions $G^{12}_{ii'}\,.$ Indeed, the system (\ref{eqn5.9}) admits a~certain general solution
\begin{equation}\label{eqn5.11} G^{12}_{ii'}=\bar G^{12}_{ii'}+C_{ii'}\quad (C_{ii'}=C_{ii'}(x_1,\ldots,x_n,w^1,w^2),\,C_{ii'}=-C_{i'i})\end{equation}
where $\bar G^{12}_{ii'}$ is the unique particular solution of the same system (\ref{eqn5.9}) such that
\begin{equation}\label{eqn5.12}
\bar G^{12}_{ii'}=\frac{\partial \bar G^{12}_{ii'}}{\partial w^j_k}=0\quad\text{if}\quad w^1_1=c^1_1,\ldots, w^2_n=c^2_n.
\end{equation}
If (\ref{eqn5.11}) is inserted into condition (\ref{eqn5.7}), we obtain differential equation
\[\begin{array}{c}(F^1)_{w^2}-(F^2)_{w^1}=\\
=\sum(\frac{\partial}{\partial x_i}+\sum c^j_i\frac{\partial}{\partial w^j})((F^2)_{w^1_i}+\sum(F^{21}_{ki}-C_{ki})_{x_k}+\sum(F^{21}_{ki}-C_{ki})_{w^{j''}}w^{j''}_k)\\ \end{array} \]
for the functions $C_{ki}.$ It may be a~little simplified to the form
\begin{equation}\label{eqn5.13}
\left(F^1\right)_{w^2}-\left(F^2\right)_{w^1}=\end{equation}
\[=\sum(\frac{\partial}{\partial x_{i}}+\sum c^j_{i}\frac{\partial}{\partial w^j})(
(F^2)_{w^1_i}-(F^1)_{w^2_i}+2\sum(C_{ik})_{x_k}+2\sum(C_{ik})_{w^{j''}}w^{j''}_k)\]
if identity (\ref{eqn5.5}) is employed.

\bigskip

We can eventually summarize as follows.

\begin{theorem}[solvability]\label{th5.1} Identities  $(\ref{eqn5.3}),$  $(\ref{eqn5.5}),$  $(\ref{eqn5.6})$ with Lemma~$\ref{lemma5.3}$ applied,  together with $(\ref{eqn5.8})$ and  $(\ref{eqn5.10})$ provide necessary and sufficient solvability conditions for the given data  $(\ref{eqn5.1})$ of the exact inverse problem  $(\ref{eqn2.1})$ with $m=2.$ \end{theorem}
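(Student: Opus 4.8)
The plan is to read Theorem~\ref{th5.1} as the $m=2$ specialization of the general solvability criterion summarized at the end of Section~\ref{sec2}, with the unknown skew--symmetric functions $G^{12}_{ii'}$ completely eliminated in favour of conditions on the data \eqref{eqn5.1} alone. First I would recall that, by that summary, a first--order $f$ solving \eqref{eqn2.1} exists if and only if \eqref{eqn2.3}, \eqref{eqn2.4}, \eqref{eqn2.7}, \eqref{eqn2.16} and \eqref{eqn2.19} hold. For $m=2$ these become, respectively: the skew--symmetry \eqref{eqn5.2} of the $G^{12}_{ii'}$; the split \eqref{eqn5.3}--\eqref{eqn5.4}; the two identities \eqref{eqn5.5}--\eqref{eqn5.6}; the trivial condition \eqref{eqn2.16}; and the divergence condition \eqref{eqn5.7}. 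Since \eqref{eqn5.3} and \eqref{eqn5.5} already involve only the data \eqref{eqn5.1}, they enter the list verbatim, and their necessity is immediate from Section~\ref{sec2}.

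The core of the argument is the removal of $G^{12}_{ii'}$, which I would carry out in three steps, each supplied by a lemma. Reading \eqref{eqn5.4} as the determining system \eqref{eqn5.9} for $G^{12}_{ii'}$, Lemma~\ref{lemma5.1} shows that a \emph{skew--symmetric} solution exists precisely when \eqref{eqn5.8} holds, and Lemma~\ref{lemma5.2} shows that, granted skew--symmetry, the Frobenius--type compatibility of \eqref{eqn5.9} is exactly \eqref{eqn5.10}. Hence \eqref{eqn5.8} together with \eqref{eqn5.10} are necessary and sufficient for the existence of an admissible $G^{12}_{ii'}$, which is then determined up to the additive skew--symmetric freedom $C_{ii'}$ of \eqref{eqn5.11}. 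Finally, although the identities \eqref{eqn5.6} nominally contain $G^{12}_{ii'}$, Lemma~\ref{lemma5.3} uses \eqref{eqn5.9} and the skew--symmetry to cancel every occurrence of $G$, rewriting \eqref{eqn5.6} purely through the data. At this point each listed condition constrains only \eqref{eqn5.1}.

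The step I expect to be the real obstacle is to justify the \emph{absence} of \eqref{eqn5.7} from the list, i.e.\ to show that it imposes no further condition on the data and can always be met by a suitable choice of the residual freedom $C_{ii'}$. Substituting \eqref{eqn5.11} into \eqref{eqn5.7} and simplifying with \eqref{eqn5.5} yields the single scalar equation \eqref{eqn5.13} for the skew coefficients $C_{ii'}(x_1,\ldots,x_n,w^1,w^2)$. The decisive observation I would make is that the apparently second--order dependence on $C$ collapses: writing $D_i=\partial/\partial x_i+\sum_j c^j_i\,\partial/\partial w^j$, the $C$--part of \eqref{eqn5.13} equals $2\sum_{i,k}D_iD_kC_{ik}$, and the skew--symmetry $C_{ik}=-C_{ki}$ reduces this to the first--order expression $\sum_{i,k}[D_i,D_k]C_{ik}$. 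Since the prescribed functions $c^j_i$ are free, they may be chosen so that some commutator $[D_i,D_k]$ is a nonzero vector field; then \eqref{eqn5.13} becomes a solvable first--order linear equation for one component $C_{ik}$, the others being set to zero. Thus \eqref{eqn5.7} is always satisfiable and is correctly omitted from the solvability list.

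The two implications then assemble directly. For necessity, any first--order solution forces \eqref{eqn2.3}, \eqref{eqn2.4}, \eqref{eqn2.7} and \eqref{eqn2.19}, which translate into \eqref{eqn5.3}, \eqref{eqn5.5}, \eqref{eqn5.6}, \eqref{eqn5.8} and \eqref{eqn5.10}. For sufficiency, these conditions let me construct $G^{12}_{ii'}$ via Lemmas~\ref{lemma5.1} and~\ref{lemma5.2}, fix the free $C_{ii'}$ to secure \eqref{eqn5.7} as above, and then invoke the construction \eqref{eqn2.14} of Section~\ref{sec2} to produce the desired Lagrange function $f$.
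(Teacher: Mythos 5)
Your proposal is correct and follows essentially the same route as the paper: Theorem~\ref{th5.1} is not given a separate proof there, but is a summary of the preceding chain — specialization of the Section~\ref{sec2} criterion to $m=2$, Lemma~\ref{lemma5.1} (skew--symmetry of $G^{12}_{ii'}$ $\Leftrightarrow$ (\ref{eqn5.8})), Lemma~\ref{lemma5.2} (compatibility of (\ref{eqn5.9}) $\Leftrightarrow$ (\ref{eqn5.10})), Lemma~\ref{lemma5.3} (elimination of $G$ from (\ref{eqn5.6})), and the absorption of (\ref{eqn5.7}) into the choice of the integration constants $C_{ii'}$. The one point where you genuinely go beyond the paper is the last step: the paper merely \emph{asserts} that (\ref{eqn5.7}) ``can be satisfied by the appropriate choice of functions $G^{12}_{ii'}$'' and records (\ref{eqn5.13}) as the equation the $C_{ii'}$ must satisfy (Theorem~\ref{th5.2}), without arguing its solvability. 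Your observation that skew--symmetry collapses $2\sum D_iD_kC_{ik}$ to $\sum [D_i,D_k]C_{ik}$, a \emph{first--order} operator that can be made nondegenerate by a suitable choice of the $c^j_i$, supplies the missing justification for dropping (\ref{eqn5.7}) from the solvability list. Two caveats are worth recording: (i) the commutator argument requires $n\ge 2$; for $n=1$ the only component $C_{11}$ vanishes identically, so (\ref{eqn5.13}) survives as a genuine condition on the data — consistently, the analogue (\ref{eqn4.5}) is retained in the solvability list of Theorem~\ref{th4.1} — and Theorem~\ref{th5.1} should be read accordingly; (ii) whether the second--order part cancels exactly or leaves a first--order remainder in $C$ depends on whether restriction to the level set $w^j_i=c^j_i$ is performed before or after the outer operators act, though either way the resulting equation for $C_{ik}$ is first order and your conclusion stands.
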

\begin{theorem}[auxiliary functions]\label{th5.2} Functions  $(\ref{eqn5.2})$ are (not uniquely) determined by formula  $(\ref{eqn5.11})$ where $\bar G^{12}_{ii'}$ is (a~unique) particular solution of system  $(\ref{eqn5.9})$ satisfying  $(\ref{eqn5.12}).$ The ``integration constants $C_{ii'}$'' satisfy the second--order differential equation  $(\ref{eqn5.13})$ at the level set $w^1_1=c^1_1,\ldots,w^2_n=c^2_n.$ Functions $c^j_i=c^j_i(x_1,\ldots,x_n)$ may be arbitrarily chosen in advance. \end{theorem}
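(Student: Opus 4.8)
The plan is to obtain Theorem~\ref{th5.2} by integrating the system $(\ref{eqn5.9})$ as a total differential system for the single skew--symmetric unknown $G^{12}_{ii'}$, and then to impose the one solvability condition not yet absorbed. First I would observe that $(\ref{eqn5.9})$ together with the skew--symmetry $(\ref{eqn5.2})$ prescribes \emph{all} first--order derivatives of $G^{12}_{ii'}$ in the first--order jet variables: the first equation of $(\ref{eqn5.9})$ gives $\partial G^{12}_{ii'}/\partial w^1_k$, while substituting $G^{21}_{ii'}=-G^{12}_{ii'}$ into the second equation gives $\partial G^{12}_{ii'}/\partial w^2_k=(F^{12}_{ii'})_{w^2_k}-(F^{22}_{ik})_{w^1_{i'}}$. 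Hence $G^{12}_{ii'}$ is determined up to an additive function of $x_1,\ldots,x_n,w^1,w^2$ alone. By Lemma~\ref{lemma5.2} the mixed--partial conditions $(\ref{eqn5.10})$ hold, so the system is completely integrable; by the Poincar\'e Lemma the normalisation $(\ref{eqn5.12})$ singles out the \emph{unique} particular solution $\bar G^{12}_{ii'}$, and the general solution is $\bar G^{12}_{ii'}+C_{ii'}$ with $C_{ii'}=C_{ii'}(x_1,\ldots,x_n,w^1,w^2)$. Lemma~\ref{lemma5.1} guarantees that the prescribed derivatives are consistent with skew--symmetry, so $G^{12}_{ii'}=-G^{12}_{i'i}$ survives precisely when $C_{ii'}=-C_{i'i}$. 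This is formula $(\ref{eqn5.11})$ and settles the first two assertions.

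It then remains to enforce the only solvability requirement of the general strategy not already accounted for by $(\ref{eqn5.9})$, namely condition $(\ref{eqn5.7})$ (equivalently $(\ref{eqn2.19})$), which secures the existence of the coefficient $A$. I would substitute $(\ref{eqn5.11})$ into the definition $(\ref{eqn2.6})$ of $F^{12}_i$, separating the contribution of the fixed part $\bar G^{12}_{ki}$ from that of the unknown $C_{ki}$. Eliminating the derivatives of $\bar G^{12}$ in favour of the given data by means of $(\ref{eqn5.9})$, and invoking identity $(\ref{eqn5.5})$ to symmetrise the principal term by trading $(F^2)_{w^1_i}$ against $-(F^1)_{w^2_i}$, condition $(\ref{eqn5.7})$ collapses to the second--order equation $(\ref{eqn5.13})$ in the $C_{ii'}$ on the level set $w^1_1=c^1_1,\ldots,w^2_n=c^2_n$. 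This yields the third assertion, while the free choice of the $c^j_i(x_1,\ldots,x_n)$ is inherited from their role in fixing the level set of the normalisation $(\ref{eqn5.12})$.

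I expect the second paragraph to be the main obstacle. The delicate point is the bookkeeping showing that, once $(\ref{eqn5.11})$ is inserted, \emph{every} occurrence of $G^{12}$ and of its derivatives in $(\ref{eqn5.7})$ is either pinned down by $(\ref{eqn5.9})$ or reduces to the explicit $C_{ii'}$--terms of $(\ref{eqn5.13})$, leaving no residual dependence on $\bar G^{12}$. This is exactly the kind of index manipulation already rehearsed in Lemma~\ref{lemma5.3}, where the analogous cancellation of the auxiliary functions was carried out for equations $(\ref{eqn5.6})$; the additional care here concerns the correct action of the operator $\partial/\partial x_i+\sum c^j_i\,\partial/\partial w^j$ on the substituted expressions and the consistent use of the skew--symmetry $C_{ii'}=-C_{i'i}$ throughout.
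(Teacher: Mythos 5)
Your first paragraph reproduces the paper's argument exactly: $(\ref{eqn5.9})$ together with $G^{21}_{ii'}=-G^{12}_{ii'}$ prescribes all derivatives $\partial G^{12}_{ii'}/\partial w^j_k$, Lemma~\ref{lemma5.1} and Lemma~\ref{lemma5.2} supply consistency and integrability, and the normalisation $(\ref{eqn5.12})$ fixes the unique $\bar G^{12}_{ii'}$, giving $(\ref{eqn5.11})$ with $C_{ii'}=C_{ii'}(x_1,\ldots,x_n,w^1,w^2)$, $C_{ii'}=-C_{i'i}$. That part is fine. The target of your second paragraph is also the right one: $(\ref{eqn5.7})$ is indeed the only condition left, and $(\ref{eqn5.5})$ is indeed what trades $(F^2)_{w^1_i}$ against $-(F^1)_{w^2_i}$ to reach $(\ref{eqn5.13})$.

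The gap is in your proposed mechanism for removing $\bar G^{12}$ from $(\ref{eqn5.7})$. You want to ``eliminate the derivatives of $\bar G^{12}$ by means of $(\ref{eqn5.9})$'' in the style of Lemma~\ref{lemma5.3}, but that cannot work here: the occurrences of $G^{21}_{ki}$ in $(\ref{eqn5.7})$, entering through the definition $(\ref{eqn2.6})$ of $F^{12}_i$, are exclusively the derivatives $(G^{21}_{ki})_{x_k}$ and $(G^{21}_{ki})_{w^{j''}}$ (and their images under $\partial/\partial x_i+\sum c^j_i\partial/\partial w^j$), i.e.\ derivatives in the base and zeroth--order variables, about which $(\ref{eqn5.9})$ says nothing. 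The commutation trick of Lemma~\ref{lemma5.3} is unavailable because, unlike $(\ref{eqn5.6})$, condition $(\ref{eqn5.7})$ carries no outer $w^j_k$--differentiation under which one could swap the order and then invoke $(\ref{eqn5.9})$. The missing idea is much simpler and is exactly what the normalisation $(\ref{eqn5.12})$ is for: since $\bar G^{12}_{ii'}$ vanishes identically on the level set $w^j_k=c^j_k(x)$ together with all its $w^j_k$--derivatives, the chain rule along the level set forces $(\bar G^{12}_{ii'})_{x_k}=(\bar G^{12}_{ii'})_{w^{j''}}=0$ there as well, so the $\bar G$--part of $(\ref{eqn5.11})$ contributes nothing and $F^{21}_{ki}+G^{21}_{ki}$ collapses at once to $F^{21}_{ki}-C_{ki}$. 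With that observation the ``delicate bookkeeping'' you anticipate disappears, and $(\ref{eqn5.7})$ reduces directly to the paper's intermediate equation for the $C_{ki}$, which $(\ref{eqn5.5})$ then simplifies to $(\ref{eqn5.13})$.
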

\begin{theorem}[the solution]\label{th5.3} The solution $f$ of the exact inverse problem  $(\ref{eqn2.1})$ is given by formula  $(\ref{eqn2.14})$ where $\bar f$ is a~particular solution of system  $(\ref{eqn2.12})$ satisfying  $(\ref{eqn2.13})$ and the coefficients $A^j_i, A$ are given by (solvable) equations  $(\ref{eqn2.15})$ and  $(\ref{eqn2.17})$ for the particular case $m=2.$
\end{theorem}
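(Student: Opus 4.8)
The plan is to read the assertion off from the general strategy of Section~\ref{sec2}, specialized to $m=2$, invoking Theorems~\ref{th5.1} and~\ref{th5.2} to guarantee that every equation used is in fact solvable; no new computation should be needed beyond these.

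First I would fix the auxiliary functions and produce $\bar f$. By Theorem~\ref{th5.2}, the solvability conditions collected in Theorem~\ref{th5.1} ensure that system $(\ref{eqn5.9})$ has the skew--symmetric general solution $(\ref{eqn5.11})$, so the functions $G^{jj'}_{ii'}$ of $(\ref{eqn5.2})$ are available and the compatibility requirement $(\ref{eqn2.4})$---in the present case $(\ref{eqn5.4})$---holds. Hence the right--hand sides $F^{jj'}_{ii'}+G^{jj'}_{ii'}$ of $(\ref{eqn2.12})$ are integrable, and integrating $(\ref{eqn2.12})$ under the initial data $(\ref{eqn2.13})$ yields the unique particular solution $\bar f$. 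Because $G^{jj'}_{ii'}$ is skew--symmetric in $i,i'$ by $(\ref{eqn5.2})$, the symmetric part of $f^{jj'}_{ii'}$ equals $F^{jj'}_{ii'}$; thus every solution of $(\ref{eqn2.12})$ automatically fulfils the \emph{first requirement} $F^{jj'}_{ii'}=E^{jj'}_{ii'}[f]$, and the general solution is $(\ref{eqn2.14})$ with $A^j_i,A$ still free functions of $x_i,w^j$.

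Next I would determine the linear correction so as to enforce the \emph{second requirement} $(\ref{eqn2.1})$. As in Section~\ref{sec2}, this reduces to the two integration problems $(\ref{eqn2.15})$ for the $A^j_i$ and $(\ref{eqn2.17})$ for $A$. For $m=2$ the obstruction $(\ref{eqn2.16})$ to $(\ref{eqn2.15})$ is trivial, so the coefficients $A^j_i$ exist; the remaining obstruction $(\ref{eqn2.18})$, equivalently $(\ref{eqn2.19})$, to the existence of $A$ is exactly condition $(\ref{eqn5.7})$. Here the freedom in the integration constants $C_{ii'}$ of $(\ref{eqn5.11})$ is spent: Theorem~\ref{th5.2} shows $(\ref{eqn5.7})$ can be satisfied by solving $(\ref{eqn5.13})$ for the $C_{ii'}$. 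With such a choice $A$ exists, the terms $B^{j'j}_{i'}$ and $B^j$ of $(\ref{eqn2.9})$ and $(\ref{eqn2.10})$ vanish, and the second requirement holds. The function $(\ref{eqn2.14})$ thereby satisfies both requirements $(\ref{eqn2.1})$.

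The only delicate point I anticipate is the coupling between the two stages: the auxiliary functions $G^{12}_{ii'}$ enter both the first requirement, through $(\ref{eqn2.12})$, and the second requirement, through the quantity $F^{12}_i$ occurring in $(\ref{eqn5.7})$. The real work is therefore to check that a single skew--symmetric choice of the $G^{12}_{ii'}$---equivalently a single choice of the constants $C_{ii'}$---renders $(\ref{eqn2.12})$ compatible and $(\ref{eqn5.7})$ solvable at once. This is exactly the consistency secured by Lemma~\ref{lemma5.3} and Theorem~\ref{th5.2}; once they are granted, the assembly into $(\ref{eqn2.14})$ is routine.
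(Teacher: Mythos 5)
Your proposal is correct and follows essentially the same route as the paper: Theorem~\ref{th5.3} is stated there as a summary whose justification is exactly the assembly you describe --- the general strategy of Section~\ref{sec2} (equations (\ref{eqn2.12})--(\ref{eqn2.18})) specialized to $m=2$, with the auxiliary functions $G^{12}_{ii'}$ supplied by Theorem~\ref{th5.2} and the obstructions discharged by the conditions collected in Theorem~\ref{th5.1}. Your observation that the skew--symmetry of the $G^{jj'}_{ii'}$ makes the first requirement automatic, and that the residual freedom in the $C_{ii'}$ is what renders (\ref{eqn5.7}) satisfiable, matches the paper's own reasoning.
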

\section{Concluding remarks}\label{s6}
\subsection{Two independent variables} We suppose $n=2.$ Then the skew--symmetry
\[G^{jj}_{ii'}=G^{jj'}_{ii}=0,\ G^{jj'}_{12}=-G^{jj'}_{21}=G^{j'j}_{21}=-G^{j'j}_{12}\]
of the auxiliary functions differs from (\ref{eqn5.2}) by a~mere exchange of the role of the upper and the lower indices. Analogous rule holds for the identities (\ref{eqn5.3})--(\ref{eqn5.5}) and for the Lemma~\ref{lemma5.1} and  Lemma~\ref{lemma5.2}. For instance, the auxiliary functions satisfy the system
\[\frac{\partial G^{jj'}_{12}}{\partial w^{j''}_1}=(F^{jj''}_{11})_{w^{j'}_2}-(F^{jj'}_{12})_{w^{j''}_1},\ 
  \frac{\partial G^{jj'}_{12}}{\partial w^{j''}_2}=(F^{jj''}_{22})_{w^{j'}_1}-(F^{jj'}_{12})_{w^{j''}_2}\]
analogous to (\ref{eqn5.9}) with the compatibility conditions
\[(F^{jj''}_{11})_{w^{j'}_2}+(F^{j'j''}_{11})_{w^j_2}=2(F^{jj'}_{12})_{w^{j''}_1},\ (F^{jj''}_{22})_{w^{j'}_1}+(F^{j'j''}_{22})_{w^j_1}=2(F^{jj'}_{12})_{w^{j''}_2}  \]
analogous to (\ref{eqn5.10}). However, the functions $F^{j'j}_{i'}$ in equations (\ref{eqn5.6}) destroy this principle and Lemma~\ref{lemma5.3} is not true. In more detail, we obtain identities
\[\begin{array}{l}
(F^{j'j}_1)_{w^{j''}_2}=(F^{j'j''}_{12}+G^{j'j''}_{12})_{w^j}-(F^{jj''}_{12}+G^{jj''}_{12})_{w^{j'}},\\
(F^{j'j}_2)_{w^{j''}_1}=(F^{j'j''}_{12}-G^{j'j''}_{12})_{w^j}-(F^{jj''}_{12}-G^{jj''}_{12})_{w^{j'}} \end{array}\]
and they provide additional ``cyclic'' differential equations such that
\begin{equation}\label{eqn6.1}
(G^{jj'}_{12})_{w^{j''}}+(G^{j'j''}_{12})_{w^{j}}+(G^{j''j}_{12})_{w^{j'}}\end{equation}
are certain functions of given data $F^{jj'}_{ii'}$ and $F^j.$ If $m=3,$ there is only one equation (\ref{eqn6.1}) and the inverse problem can be resolved in a~reasonable space. In any case, the general strategy does not fail.
\subsection{The first--order problem}\label{ss6.2} Let us suppose $F^{jj'}_{ii'}=0$ identically. Then \[f^{jj'}_{ii'}=G^{jj'}_{ii'}=-G^{jj'}_{i'i}=-f^{jj'}_{i'i}\] and it follows that all derivatives $f^{j_1\cdots j_k}_{\, i_1\cdots i_k}$ are skew--symmetric both in the lower and in the upper indices. Therefore $f$ is a~polynomial in variables $w^j_i$ and even
\[f=\sum A^{j_1\cdots j_k}_{\, i_1\cdots i_k}(\cdot\cdot,x_i,w^j,\cdot\cdot)\det\left(
\begin{array}{ccc}
w^{j_1}_{i_1}&\ldots&w^{j_1}_{i_k}\\
\ldots&&\ldots\\
w^{j_k}_{i_1}&\ldots&w^{j_k}_{i_k}
\end{array}\right)\]
\[(\text{sum over }i_1<\cdots <i_k;\, j_1<\cdots <j_k)\]
is of a~very special kind. The Helmholz conditions for the functions $F^j$ appear as follows. We start with the identity $e[\sum F^jw^j_t]=0$ in the extended jet space. In more detail
\[e^{j'}\left[\sum F^jw^j_t\right]=\sum(F^j)_{w^{j'}}-\sum\frac{d}{dx_i}\left(\sum F^j_{w^{j'}_i}w^j_t\right)-\frac{d}{dt}F^{j'}=0,\] which provides the solvability conditions
\[\begin{array}{l}
(F^j)_{w^{j'}}-\sum (F^j)_{w^{j'}_ix_i}-\sum (F^j)_{w^{j}_{i'}w^{j''}}w^{j''}_i-(F^{j'})_{w^j}=0,\\
(F^j)_{w^{j'}_i}+(F^{j'})_{w^j_i}=0,\ (F^j)_{w^{j'}_iw^{j''}_{i'}}+(F^j)_{w^{j'}_{i'}w^{j''}_{i}}=0.
\end{array}\]
Analogously as above, it follows that
\[F^j=\sum B^{jj_1\cdots j_k}_{\ \,i_1\cdots i_k}(\cdot\cdot,x_i,w^j,\cdot\cdot)\det\left(
\begin{array}{ccc}
w^{j_1}_{i_1}&\ldots&w^{j_1}_{i_k}\\
\ldots&&\ldots\\
w^{j_k}_{i_1}&\ldots&w^{j_k}_{i_k}
\end{array}\right)\]
\[(\text{sum over }i_1<\cdots <i_k;\, j_1<\cdots <j_k)\]
are polynomials in the first--order variables $w^j_i.$ The Tonti integral
\[\tilde f=\int_0^1\sum t^kB^{jj_1\cdots j_k}_{\ \,i_1\cdots i_k}(\cdot\cdot,x_i,tw^j+(1-t)c^j,\cdot\cdot)dt\,(w^j-c^j)\det\left(
\begin{array}{ccc}
w^{j_1}_{i_1}&\ldots&w^{j_1}_{i_k}\\
\ldots&&\ldots\\
w^{j_k}_{i_1}&\ldots&w^{j_k}_{i_k}
\end{array}\right)\]
where the fixed constants $c^1,\ldots,c^m\in\mathbb R$ may be regarded for a~reasonable solution of this marginal inverse problem which is extensively discussed in \cite{T4}.
\subsection{The reducible case}\label{ss6.3}
In more generality, let us suppose
\[\begin{array}{l}
F^{jj'}_{ii'}=F^{jj'}_{ii'}(\cdot\cdot,x_{i''},w^{j''},\cdot\cdot)\quad\text{if}\quad j\neq j'\quad\text{and}\quad i\neq i'\,,\\
F^{jj}_{ii'}=F^{jj}_{ii'}(\cdot\cdot,x_{i''},w^{j''},w^j_i,w^j_{i'}\cdot\cdot),\ F^{jj'}_{ii}=F^{jj'}_{ii}(\cdot\cdot,x_{i''},w^{j''},w^j_i,w^{j'}_i\cdot\cdot)
\end{array}\]
where $i,i'=1,\ldots,n$ and $j,j'=1,\ldots,m.$ Then the solvability conditions (\ref{eqn2.4}) separately concern either only the given functions $F^{jj'}_{ii'}$ or the auxiliary functions $G^{jj'}_{ii'}.$ In more detail, if the identities
\[(F^{jj}_{ii'})_{w^j_i}=(F^{jj}_{ii})_{w^j_{i'}},\ (F^{jj'}_{ii})_{w^j_i}=(F^{jj}_{ii})_{w^{j'}_i}\]
are satisfied, then there exists function $\bar f=\bar f(\cdot\cdot,x_i,w^j,w^j_i,\cdot\cdot)$ such that
\[\frac{\partial^2\bar f}{\partial w^j_i\partial w^{j'}_{i'}}=F^{jj'}_{ii'}\quad (i,i'=1,\ldots,n;\, j,j'=1,\ldots,m).\]
If $f$ is a~solution of the exact inverse problem (\ref{eqn2.1}) then
\[\begin{array}{c}
E^{jj'}_{ii'}[f-\bar f]=E^{jj'}_{ii'}[f]-E^{jj'}_{ii'}[\bar f]=0,\\
E^j[f-\bar f]=E^j[f]-E^j[\bar f]= F^j-E^j[\bar f].\end{array}\]
It follows that $g=f-\bar f$ is the solution of the first--order inverse problem
\[0=E^{jj'}_{ii'}[g],\ F^j-E^j[\bar f]=e^j[g]\]
 of the preceding point~\ref{ss6.2}.
\subsection{Higher--order inverse problems} We believe that some reducible higher--order Euler--Lagrange expressions appearing in applications can be investigated analogously to the preceding point~\ref{ss6.3}. However the simple dichotomy of symmetry and skew--symmetry between the functions $F^{jj'}_{ii'}$ and $G^{jj'}_{ii'}$ turns into rather involved combinatorial structure for the general case of the higher--order inverse problem which deserves more place and another large article. 
\section*{Appendix}
We suppose $n=1$ with the same alternative notation of the jet coordinates
\[x=x_1, w^j_r=w^j_{1\cdots 1}\qquad (r \text{ terms } 1\cdots 1;\, j=1,\ldots,m)\]
as in Section~\ref{sec4} above. Recalling the Lagrange functions and the Euler--Lagrange expressions
\[f=f(x,\cdot\cdot,w^j_r,\cdot\cdot),\quad e^j[f]=\sum(-1)^r\frac{d^r}{dx^r}\frac{\partial f}{\partial w^j_r}\quad (j=1,\ldots,m),\]
we can state short proof of the following result.\\\\
{\bf Proposition.} \emph{Let the Euler--Lagrange expressions $e^j[f]$ be of the order $S$ (at most). If $S=2K$ is even then $e^j[f]=e^j[g]$ for an~appropriate Lagrange function $g$ of the order $K.$ If $S=2K+1$ is odd then \[e^j[f]=e^j[g_0+\sum g_kw^k_{K+1}]\] where $g_0,\ldots,g_m$ are appropriate functions of the order $K.$} 
\begin{proof} Let \[ \varphi=fdx+\sum a^j_r\omega^j_r\qquad (\omega^j_r=dw^j_r-w^j_{r+1}dx)\]
be the familiar Poincar\'e--Cartan form of function $f.$ It is uniquely determined by the property
\[d\varphi\sim\sum e^j[f]\,\omega^j_0\wedge dx\qquad (\text{ mod all forms } \omega^{j'}_r\wedge\omega^{j''}_s).\]
If $f$ is of the order $R$ at most, the explicit formulae
\[a^j_r=0\ (r\geq R),\ a^j_{R-1}=\frac{\partial f}{\partial w^j_R},\ a^j_{r-1}=\frac{\partial f}{\partial w^j_{r}}-\frac{d}{dx}a^j_r\quad (r=R-1,\ldots,1)\]
for the coefficients $a^j_r$ are well--known.

 We are passing to the proof proper. In more detail, let us denote
\[d\varphi=\sum e^j[f]\,\omega^j_0\wedge dx+\sum_{r\geq s}a^{kj}_{rs}\,\omega^k_r\wedge\omega^j_s.\]
Then, modulo all forms $\omega^j_r\wedge\omega^{j'}_s\wedge\omega^{j''}_t$, we have 
\[0=d^2\varphi\sim\sum\frac{\partial e^j[f]}{\partial w^k_r}\omega^k_r\wedge\omega^j_0\wedge dx+\]
 \[+\left(\sum\frac{d}{dx}a^{kj}_{rs}\cdot\omega^k_r\wedge\omega^j_s+\sum a^{kj}_{rs}\cdot(\omega^k_{r+1}\wedge\omega^j_s+\omega^k_r\wedge\omega^j_{s+1})\right)\wedge dx\]
and it follows successively that
\[\begin{array}{l}
a^{kj}_{r0}=0\ (r\geq S), \text{ see the top--order factor of } \cdot\cdot\wedge\omega^j_0\wedge dx,\\
a^{kj}_{r1}=0\ (r\geq S-1), \text{ see the top--order factor of } \cdot\cdot\wedge\omega^j_1\wedge dx,\\
\cdots\\
a^{kj}_{r,K-1}=0\ (r\geq S-K+1), \text{ see the top--order factor of }\cdot\cdot\wedge\omega^j_{K-1}\wedge dx,\\
a^{kj}_{rK}=0\ (\text{ all } r \text{ if } S=2K;\,r>S-K \text{ if } S=2K+1),\\
a^{kj}_{r,K+1}=0\ (\text{ all } r \text{ if } S=2K+1),\\
a^{kj}_{rs}=0 \text{ if } r>K+1.
\end{array}\]
After this observation, the concluding part of the proof easily follows.

Assuming $S=2K,$ we have
$d\varphi\sim 0$ (mod $dx,$ all  $\omega^j_r$ with $r\leq K-1)$
hence
\[d\varphi\sim 0\quad (\text{mod}\ dx, \text{all } dw^j_r \text{ with } r\leq K-1)\]
and therefore
\[\varphi=dA+adx+\sum^{K-1} a^j_rdw^j_r=dA+\psi\]
by applying the Poincar\'e Lemma. However
\[\psi=adx+\sum a^j_rdw^j_r=gdx+\sum a^j_r\omega^j_r,\quad g=a+\sum a^j_rw^j_{r+1}\]
and the equality $d\psi=d\varphi$ implies that $\psi$ is the Poincar\'e--Cartan form for the Lagrange function $g$ of the order $K$ at most.

Assuming $S=2K+1,$ then analogously
\[\varphi=dA+\psi,\ \psi=gdx+\sum a^j_r\omega^j_r\quad (r\leq K)\]
and $\psi$ is the Poincar\'e--Cartan form of $g$ which is of the order $K+1$ at most. However we suppose that the Euler--Lagrange expressions of $g$ are of the order $S=2K+1$ and this implies the linearity in variables $w^j_{K+1}.$
\end{proof}

The proof rests on the same idea as in~\cite[p. 56--68]{T2}. Though it is much shorter, the use of the Poincar\'e--Cartan form obscures the elementary nature of the result since the direct approach is quite simple. Indeed, assume $n=1$ and let $f$ be a~Lagrange function just of the order $K.$ Then the Euler--Lagrange expressions are
\[\begin{array}{llr}
e^j[f]=\cdots+(-1)^K\sum f_{w^j_Kw^{j'}_K}w^{j'}_{2K}&(f=f(x,w^1,\ldots,w^m_K)),&(A)\\
e^j[f]=\cdots+(-1)^K\sum f^j_{w^{j'}_{K-1}}w^{j'}_{2K-1}&(f=\cdots+\sum f^j(x,w^1,\ldots,w^m_{K-1})w^j_K)&(B)
\end{array}\]
according to whether $f$ is nonlinear or linear in the top--order variables. We have
\[\begin{array}{ccr}
\text{max order }e^j[f]=2\,\text{order}\,f\text{ in } (A)&\\
&&\qquad(C)\\
\text{max order }e^j[f]=2\,\text{order}\,f-1\text{ in } (B)&
\end{array}\]
except for the case when $\partial f^j/\partial w^{j'}_{K-1}=0$ identically. However if \[f^j=f^j(x,w^1,\ldots,w^m_{K-2})\quad (j=1,\ldots,m)\] then $f$ in $(B)$ can be replaced with the Lagrange function
\[\bar f=f-\frac{d}{dx}\sum f^jw^j_{K-1}\quad \left(e^j[f]=e^j[\bar f];\, j=1,\ldots,m\right)\]
of the lower order. We conclude that there does exist the Lagrange function exactly satisfying both subcases $(C).$ This is just the Proposition.

\subsection*{Conflict of Interests}

The authors declare that there is no conflict of interests regarding the publication of this paper.

\subsection*{Acknowledgements}
This paper was elaborated with the financial support of the European
Union's "Operational Programme Research and Development for
Innovations", No. CZ.1.05/2.1.00/03.0097, as an activity of the
regional Centre AdMaS "Advanced Materials, Structures and
Technologies".


*Brno University of Technology

Faculty of Civil Engineering

Department of Mathematics

Veve\v{r}\'{\i} 331/95, 602 00 Brno

Czech Republic

email: chrastinova.v@fce.vutbr.cz

\bigskip

**Brno University of Technology

Faculty of Civil Engineering

AdMaS Center

Veve\v{r}\'{\i} 331/95, 602 00 Brno

Czech Republic

email: tryhuk.v@fce.vutbr.cz

\end{document}